\newcommand{\snug}{\unskip\kern-\mathsurround}
\newcommand{\ad}{{\rm ad}}
\newcommand{\gr}{{\rm gr}}
\newcommand{\Z}{{\mathbb Z}}
\newcommand{\F}{{\mathbb F}}
\newcommand{\Q}{{\mathbb Q}}
\newcommand{\tr}{{\rm tr}}
\newtheorem{theorem}{Theorem}[section]
\newtheorem{lemma}[theorem]{Lemma}
\newtheorem{corollary}[theorem]{Corollary}
\newtheorem{conjecture}[theorem]{Conjecture}
\theoremstyle{definition}
\newtheorem{definition}[theorem]{Definition}
\begin{document}
\title{Linking Numbers and the Tame Fontaine-Mazur Conjecture}
\author{John Labute}
\address{Department of Mathematics and Statistics, McGill University, Burnside Hall, 805 Sherbrooke Street West, Montreal QC H3A 0B9, Canada}
\email{labute@math.mcgill.ca}
\begin{abstract}
Let $p$ be an odd prime, let $S$ be a finite set of primes $q\equiv1\ {\rm mod}\ p$ but $q\not\equiv1\ {\rm mod}\ p^2$  and let $G_S$ be the Galois group of the maximal $p$-extension of $\Q$ unramified outside of $S$. If $\rho$ is a continuous homomorphism of $G_S$ into ${\rm GL}_2(\Z_p)$ then under certain conditions on the linking numbers of $S$ we show that $\rho=1$ if $\overline\rho=1$. We also show that $\overline\rho=1$ if $\rho$ can be put in triangular form mod $p^3$.
\end{abstract}
\date{May 15, 2013}
\thanks{$^\dag$Research supported in part by an NSERC Discovery Grant.}
\subjclass{11R34, 20E15, 12G10, 20F05, 20F14, 20F40}

\maketitle
\textbf{}\hfill{\it To Helmut Koch on his 80th birthday}
\section{Statement of Results} Let $p$ be a rational prime. Let $K$ be a number field, let $S$ be a finite set of primes of $K$ with residual characteristics $\ne p$ and let $\Gamma_{S,K}$ be the Galois group of the maximal (algebraic) extension of $K$ unramified outside of $S$. The Tame Fontaine-Mazur Conjecture (cf. \cite{FM}, Conj. 5a) states that every continuous homomorphism
$$
\rho : \Gamma_{S,K}\rightarrow{\rm GL}_n(\Z_p)
$$
has a finite image. If $\overline\rho$ is the reduction of $\rho$ mod $p$ then $\overline\rho$ is trivial if and only if the image of $\rho$ is contained in the standard subgroup
$$
{\rm GL}_n^{(1)}(\Z_p)=\{X\in{\rm GL}_n(\Z_p)\mid X\equiv1\  {\rm mod}\  p\}
$$
which is a pro-$p$-group. Hence, if $\overline\rho=1$, the homomorphism $\rho$ factors through $G_{S,K}$, the maximal pro-$p$-quotient of $\Gamma_{S,K}$. Since ${\rm GL}_n^{(1)}(\Z_p)$ is torsion free, this shows that when $\overline\rho=1$ the Tame Fontaine-Mazur Conjecture is equivalent to the following conjecture.
\medskip

\begin{conjecture}~\label{WTFM}
If $\rho:G_{S,K} \rightarrow{\rm GL}_n^{(1)}(\Z_p)$ is a continuous homomorphism then $\rho=1$.
\end{conjecture}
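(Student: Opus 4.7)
The plan is to proceed by contradiction: assume $\rho\ne 1$ and derive an impossible analytic quotient of $G_{S,K}$. Since $\mathrm{GL}_n^{(1)}(\Z_p)$ is a torsion-free $p$-adic analytic pro-$p$ group, the image $H=\rho(G_{S,K})$ is a closed $p$-adic analytic subgroup of positive dimension $d$, and $\cd_p(H)=d$. Let $\mathfrak{h}\subset\mathfrak{gl}_n(\Q_p)$ be its Lie algebra. I would then split the analysis according to whether $\mathfrak{h}$ is solvable.

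In the solvable case, a composition series with abelian graded pieces reduces $\rho$ to a finite list of continuous characters $G_{S,K}\to 1+p\Z_p$. Each such character factors through $G_{S,K}^{\ab}$, which class field theory describes in terms of local units at the primes of $S$ and the global unit group. Under the abstract's hypothesis that every $q\in S$ satisfies $q\equiv 1\bmod p$ and $q\not\equiv 1\bmod p^2$, the pro-$p$ part of $\Z_q^\times$ is cyclic of order $p$, so the local contribution is a finite $p$-group; combined with the triviality of the global unit input (for $K=\Q$), one deduces that $G_{S,K}^{\ab}$ is finite, and hence any character to the torsion-free group $1+p\Z_p$ must vanish. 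In the non-solvable case, $\mathfrak{h}$ contains a non-abelian simple subalgebra, so $H$ contains an open subgroup commensurable with a congruence subgroup of $\mathrm{SL}_2(\Z_p)$, forcing $\cd_p(H)\geq 3$. I would then prove, using the linking-number hypotheses on $S$, that $G_{S,K}$ is mild in the sense of Koch--Labute, so that $\gr(G_{S,K})$ is presented by a strongly free quadratic sequence and $\cd_p(G_{S,K})=2$. A Hochschild--Serre comparison between $G_{S,K}$, its kernel $N=\ker(\rho)$, and $H$ would then yield the numerical contradiction.

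The main obstacle is precisely this last step of the non-solvable case: the inequality $\cd_p(H)\le\cd_p(G_{S,K})$ is not automatic for pro-$p$ quotients, since a priori $N$ might be cohomologically complicated. To make the argument work one needs to exhibit enough control on $N$, presumably by transporting the mild quadratic filtration of $G_{S,K}$ to a free (or mild) filtration on $N$ and using the graded exact sequence $0\to\gr(N)\to\gr(G_{S,K})\to\gr(H)\to 0$ to force the dimensions to line up. This is where the explicit linking-number data about $S$ must really be used, and it is also where the $n=2$ restriction seems essential: the dimension count, together with the strongly free relation structure, just barely suffices to obstruct a non-solvable two-dimensional image, whereas for larger $n$ or weaker hypotheses on $S$ the constraint becomes too loose, which is why the full conjecture remains open.
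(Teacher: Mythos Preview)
This statement is a \emph{conjecture} in the paper and is not proven there in general; the paper only establishes special cases for $K=\Q$, and by a route entirely different from yours.

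The paper's method is mod-$p$ Lie-theoretic, not cohomological. It introduces the linking algebra $\mathfrak l_S$ over $\F_p$ (generators $\xi_i$, relators $\sigma_i=c_i\xi_i+\sum_{j\ne i}\ell_{ij}[\xi_i,\xi_j]$) and constructs a map
\[
\ell:\mathrm{Hom}_{\mathrm{cont}}\bigl(G_S,\mathrm{GL}_n^{(1)}(\Z_p)\bigr)\longrightarrow\mathrm{Hom}\bigl(\mathfrak l_S,gl_n(\F_p)\bigr)
\]
with $\rho=1\iff\ell(\rho)=0$. The only analytic input is an elementary lemma: if $\rho\ne1$ then $\rho(G_S)\not\subset\mathrm{GL}_n^{(2)}(\Z_p)$, which uses just that $G_S^{\ab}$ is $p$-elementary. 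Writing $\rho(x_i)=1+pA_i$ and reading the Koch relators modulo $p^3$ forces $c_i\overline A_i+\sum_{j\ne i}\ell_{ij}[\overline A_i,\overline A_j]=0$ in $gl_n(\F_p)$, which defines $\ell(\rho)$. The conjecture for a given $S$ and $n$ is thereby converted into the finite question of whether $\mathfrak l_S$ admits a nonzero $n$-dimensional representation over $\F_p$ (``Property $FM(n)$''). Neither mildness nor $\cd_p$ enters this reduction; mildness is invoked elsewhere in the paper only for the separate triangular-mod-$p^3$ theorem.

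Your solvable branch is essentially the easy observation that a non-perfect $\mathfrak h$ forces $H^{\ab}$ infinite, contradicting finiteness of $G_S^{\ab}$; that much is fine (though phrasing it as ``reduces $\rho$ to characters'' overstates what a solvable filtration gives you). The perfect branch, however, has exactly the gap you flag, and it is fatal: $\cd_p$ does not descend along pro-$p$ quotients, and no Hochschild--Serre maneuver yields $\cd_p(H)\le\cd_p(G_S)$ without already controlling $\ker\rho$. The paper never attempts such a comparison; it bypasses the issue entirely by collapsing everything to a single mod-$p$ layer governed by the linking numbers $\ell_{ij}$ and the constants $c_i$.
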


Conversely, the truth of Conjecture~\ref{WTFM} for any number field $K$ implies the Fontaine-Mazur Conjecture. In this paper we will prove Conjecture~\ref{WTFM} when $K=\Q$ for certain sets $S$.

We now let $K=\Q$ and $G_S=G_{S,\Q}$. To prove Conjecture~\ref{WTFM} we can assume that the primes in $S$ are congruent to $1$ mod $p$ since these are the only primes different from $p$ that can ramify in a $p$-extension of $\Q$. We will also assume that the primes in $S$ are not congruent to $1$ mod $p^2$, which is equivalent to $G_S/[G_S,G_S]$ being elementary. In this case we  will show that Conjecture~\ref{WTFM} follows from a Lie theoretic analogue of it when $p$ is odd. We therefore assume that $p\ne2$ for the rest of the paper.

To formulate this analogue let $S=\{q_1,\ldots,q_d\}$ and let $\mathfrak l_S$ be the finitely presented Lie algebra over $\F_p$ generated by $\xi_1,\ldots,\xi_d$ with relators $\sigma_1,\ldots,\sigma_d$ where
$$
\sigma_i=c_i\xi_i+\sum_{j\ne i}\ell_{ij}[\xi_i,\xi_j]
$$
with $c_i=(q_i-1)/p$ mod $p$ and the linking number $\ell_{ij}$ of $(q_i,q_j)$ defined by $q_i \equiv g_j^{-\ell_{ij}}$ mod $q_j$ with $g_j$ a primitive root mod~$q_j$. We call $\mathfrak l_S$ the linking algebra of $S$. Up to isomorphism, it is independent of the choice of primitive roots.
\medskip

\begin{theorem}\label{main}
There exists a mapping
$$
\ell:{\rm Hom}_{cont}(G_S,{\rm GL}_n^{(1)}(\Z_p))\rightarrow{\rm Hom}(\mathfrak l_S,gl_n(\F_p))
$$
such that $\rho=1\iff\ell(\rho)=0$.
\end{theorem}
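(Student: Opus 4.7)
The plan is to exploit the natural filtration $G^{(k)} = \{X \in {\rm GL}_n^{(1)}(\Z_p) \mid X \equiv 1 \bmod p^k\}$, whose successive quotients $G^{(k)}/G^{(k+1)} \cong gl_n(\F_p)$ linearize the target, together with a Koch-style presentation $G_S = F/R$ in which $F$ is the pro-$p$ free group on generators $x_1, \ldots, x_d$ lifting a choice of tame inertia at $q_1, \ldots, q_d$ and $R$ is generated by $d$ local relations $r_1, \ldots, r_d$. Given a continuous $\rho: G_S \to {\rm GL}_n^{(1)}(\Z_p)$, I will write $\rho(x_i) = 1 + pA_i$ with $A_i \in M_n(\Z_p)$, set $a_i = A_i \bmod p$, and define $\ell(\rho): \mathfrak l_S \to gl_n(\F_p)$ by $\xi_i \mapsto a_i$.

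To check that this assignment extends to a Lie algebra homomorphism, I invoke Koch's description of $r_i$ as a pro-$p$ lift of the tame local relation, namely $r_i = x_i^{q_i-1}[x_i, \phi_i^{-1}]$ in $F$, where $\phi_i \in F$ is a Frobenius lift satisfying $\phi_i \equiv \prod_j x_j^{-\ell_{ij}}$ modulo the second $p$-central term $D_2(F) = F^p[F, F]$. The discrepancy between $r_i$ and $x_i^{q_i-1}\prod_{j \ne i}[x_i, x_j]^{\ell_{ij}}$ then lies in $[F, D_2(F)] \cdot [F, [F, F]]$, whose image under $\rho$ is contained in $G^{(3)}$ via the inclusions $[G^{(i)}, G^{(j)}] \subseteq G^{(i+j)}$. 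Combining this with the odd-$p$ congruences $(1+pA)^p \equiv 1+p^2 A \pmod{p^3}$ and $[1+pA, 1+pB] \equiv 1+p^2[A,B] \pmod{p^3}$, and with $q_i - 1 \equiv pc_i \pmod{p^2}$ where $c_i \ne 0 \bmod p$ by the hypothesis $q_i \not\equiv 1 \bmod p^2$, the identity $\rho(r_i)=1$ reduces mod $p^3$ to
$$
c_i a_i + \sum_{j \ne i} \ell_{ij}[a_i, a_j] = 0 \quad \text{in } gl_n(\F_p),
$$
which is precisely $\ell(\rho)(\sigma_i) = 0$.

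For the equivalence, the implication $\rho = 1 \Rightarrow \ell(\rho) = 0$ is immediate. Conversely, if $\ell(\rho) = 0$ then every $a_i = 0$, forcing $\rho(x_i) \in G^{(2)}$. I then induct on the filtration level: assuming $\rho(x_i) \in G^{(k)}$ for some $k \geq 2$, write $\rho(x_i) = 1+p^k B_i$ and $b_i = B_i \bmod p$. Now the commutator term $\rho([x_i, \phi_i^{-1}]) \in [G^{(k)}, G^{(k)}] \subseteq G^{(2k)} \subseteq G^{(k+2)}$ no longer contributes at leading order, so $\rho(r_i) = 1$ reads mod $p^{k+2}$ as $1 + p^{k+1} c_i b_i \equiv 1$ by $(1+p^k B)^p \equiv 1+p^{k+1}B \pmod{p^{k+2}}$. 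Since $c_i$ is a unit mod $p$, we get $b_i = 0$, hence $\rho(x_i) \in G^{(k+1)}$. Iterating and using $\bigcap_k G^{(k)} = \{1\}$ together with topological generation of $G_S$ by the $x_i$ yields $\rho = 1$.

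The principal obstacle is the precise application of Koch's formula. I need the relation $r_i$ to differ from $x_i^{q_i-1}\prod_{j \ne i}[x_i, x_j]^{\ell_{ij}}$ only by elements whose $\rho$-images land in $G^{(3)}$, not merely in the full third Zassenhaus term $D_3(F)$: for $p$ odd one has $F^p \subseteq D_3(F)$, but $\rho(F^p) \subseteq G^{(2)}$ only, so a crude $D_3$-bound would be fatal. The saving point is that Koch's error is intrinsically a \emph{commutator}---the piece $[x_i, u^{-1}]$ with $u \in D_2(F)$---so it lies in $[F, D_2(F)]$ and hence maps into $G^{(3)}$ via the bracket inclusion. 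Once this structural fact is in place, all remaining work is standard $p$-adic manipulation, with $p \ne 2$ entering specifically through the expansion $(1+p^k B)^p \equiv 1+p^{k+1}B \pmod{p^{k+2}}$.
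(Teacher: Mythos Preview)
Your argument is correct and essentially matches the paper's: set $\ell(\rho)(\xi_i)=A_i\bmod p$ with $\rho(x_i)=1+pA_i$, verify the relations $\sigma_i$ by computing $\rho(r_i)\bmod p^3$ from Koch's presentation, and for the converse use the filtration $\{{\rm GL}_n^{(k)}(\Z_p)\}$ to see that $\ell(\rho)=0$ forces $\rho=1$. The only cosmetic difference is that the paper packages the converse as a standalone lemma---for any pro-$p$ group $G$ with $G/[G,G]$ elementary, $\rho(G)\ne 1$ implies $\rho(G)\not\subseteq{\rm GL}_n^{(2)}(\Z_p)$---whereas you run the equivalent filtration bootstrap directly on the relations $r_i$; both hinge on $c_i\not\equiv 0\bmod p$.
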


\begin{corollary}
If the cup-product $H^1(G_S,\F_p)\times H^1(G_S,\F_p)\rightarrow H^2(G_S,\F_p)$ is trivial then Conjecture~\ref{WTFM} is true for $G_S$.
\end{corollary}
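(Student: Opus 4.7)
My plan is to deduce the corollary from Theorem~\ref{main}: it will suffice to show that the triviality of the cup product forces the linking algebra $\mathfrak l_S$ itself to be zero, because then $\mathrm{Hom}(\mathfrak l_S,gl_n(\F_p))=0$, so $\ell(\rho)=0$ for every $\rho$, and Theorem~\ref{main} yields $\rho=1$.

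The central step is to translate the cup-product hypothesis into the vanishing of the linking numbers $\ell_{ij}$ mod $p$. For this I would take a minimal presentation $1\to R\to F\to G_S\to 1$ with $F$ the free pro-$p$ group on lifts $x_1,\dots,x_d$ of a dual basis of $H^1(G_S,\F_p)$. Koch's description of the tame situation writes the relator attached to $q_k$ as $r_k=x_k^{q_k-1}[x_k,y_k]$, with $y_k$ a lift of Frobenius at $q_k$ satisfying $y_k\equiv\prod_{j}x_j^{\ell_{kj}}\pmod{[F,F]F^p}$. Reducing the Magnus expansion modulo $p$, the contribution of $x_k^{q_k-1}$ in degree two vanishes since $q_k\equiv 1\pmod p$, so
$$M(r_k)\equiv\sum_{j\ne k}\ell_{kj}(X_kX_j-X_jX_k)\pmod{\text{higher order}}.$$
Dualising the transgression $R/R^p[R,F]\xrightarrow{\sim}H^2(G_S,\F_p)$ and letting $\omega_1,\dots,\omega_d$ denote the basis dual to $r_1,\dots,r_d$ (linearly independent because $\dim H^2(G_S,\F_p)=d$ for such $S$, by Shafarevich), the standard formula for the cup product via Magnus coefficients gives
$$\chi_i\cup\chi_j=\ell_{ij}\omega_i-\ell_{ji}\omega_j\qquad(i\ne j),$$
with $\chi_i\cup\chi_i=0$ automatic because $p$ is odd.

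Once this dictionary is in place, the rest is immediate. Trivial cup product combined with the linear independence of the $\omega_k$ forces $\ell_{ij}\equiv 0\pmod p$ for all $i\ne j$. The defining relations of $\mathfrak l_S$ then collapse to $\sigma_i=c_i\xi_i$, and since the assumption $q_i\not\equiv 1\pmod{p^2}$ ensures $c_i\not\equiv 0\pmod p$, we obtain $\xi_i=0$ in $\mathfrak l_S$ for every $i$, so $\mathfrak l_S=0$ and the corollary follows.

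The main obstacle I anticipate is bookkeeping in the first step: pinning down sign conventions so that Koch's formula for $r_k$ yields exactly the linking numbers $\ell_{ij}$ defined in the paper, rather than some antisymmetrisation or variant. Once the Magnus calculation is done carefully, the linear-algebra conclusion and the appeal to Theorem~\ref{main} are routine.
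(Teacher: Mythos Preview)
Your argument is correct and is exactly the route the paper intends. The paper does not spell out a proof of the corollary, but it places it immediately after Theorem~\ref{main} and records in Section~2 the Koch formula $\phi_i(\chi_i\cup\chi_j)=-\ell_{ij}$ for $i<j$ (together with the companion identity $\phi_j(\chi_i\cup\chi_j)=\ell_{ji}$ implicit in the same reference), so the reader is meant to deduce, just as you do, that triviality of the cup product forces all $\ell_{ij}\equiv 0\pmod p$, whence the relators collapse to $c_i\xi_i$ and $\mathfrak l_S=0$. Your caution about sign conventions is well placed but harmless here: whatever the signs, the coefficients of $\omega_i$ and $\omega_j$ in $\chi_i\cup\chi_j$ are $\pm\ell_{ij}$ and $\pm\ell_{ji}$, and vanishing of the cup product kills both.
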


\begin{definition}[Property $FM(n)$]
A lie algebra $\mathfrak g$ over a field $F$ is said to have Property $FM(n)$ if every $n$-dimensional representation of $\mathfrak g$ is trivial.
\end{definition}
\medskip

\begin{theorem}~\label{main2}
If $\mathfrak l_S$ has Property $FM(k)$ then Conjecture~\ref{WTFM} is true for $n=k$.
\end{theorem}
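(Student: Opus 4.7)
The plan is to deduce Theorem~\ref{main2} as an immediate consequence of Theorem~\ref{main}. Fix $n=k$ and let $\rho:G_S\to{\rm GL}_k^{(1)}(\Z_p)$ be an arbitrary continuous homomorphism. Feeding $\rho$ into the mapping
$$
\ell:{\rm Hom}_{cont}(G_S,{\rm GL}_k^{(1)}(\Z_p))\rightarrow{\rm Hom}(\mathfrak l_S,gl_k(\F_p))
$$
produced by Theorem~\ref{main}, I obtain a Lie algebra homomorphism $\ell(\rho):\mathfrak l_S\to gl_k(\F_p)$, i.e.\ a $k$-dimensional representation of $\mathfrak l_S$ over $\F_p$. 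The hypothesis that $\mathfrak l_S$ enjoys Property $FM(k)$ says precisely that every such representation is trivial, so $\ell(\rho)=0$. Invoking the equivalence $\rho=1\iff\ell(\rho)=0$ supplied by Theorem~\ref{main} yields $\rho=1$, which is Conjecture~\ref{WTFM} for $n=k$.

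In outline this is a one-step deduction, so there is no genuine obstacle inside the proof of Theorem~\ref{main2}; all the content has been loaded into Theorem~\ref{main} and into the definition of Property $FM(k)$. The only subtlety worth checking is a matter of bookkeeping: that ``$k$-dimensional representation of $\mathfrak l_S$'' in the definition of $FM(k)$ is indeed synonymous with an element of ${\rm Hom}(\mathfrak l_S,gl_k(\F_p))$, and that ``trivial'' is understood as the zero Lie algebra homomorphism (rather than merely an abelian quotient acting trivially on some module). With these conventions pinned down, the argument goes through verbatim, and the real work lies upstream in constructing $\ell$ and verifying the equivalence asserted in Theorem~\ref{main}.
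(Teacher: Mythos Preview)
Your proposal is correct and mirrors the paper's treatment: the paper provides no separate proof section for Theorem~\ref{main2}, evidently regarding it as an immediate consequence of Theorem~\ref{main} together with the definition of Property~$FM(k)$, exactly as you argue.
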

\medskip

If $|S|\le2$ then $\mathfrak l_S$ has Property $FM(n)$ for all $n$ since $\mathfrak l_S=0$ in this case. However $\mathfrak l_S$ may not have Property $FM(2)$ if $|S|\ge 3$; for example, if $p=3$, $S=\{7,31,229\}$ or if $p=5$ and $S=\{11,31,1021\}$. However, the number of such $S$ is relatively small; for example, if $p=7$ and the primes in $S$ are at most $10,000$, the set $S$ fails to have Property $FM(2)$ approximately $.2\%$ of the time. The following theorem gives necessary and sufficient conditions for Property $FM(n)$ to hold when $|S|=3$.
\begin{theorem}\label{S3}
Let $m_{ij}=-\ell_{ij}/c_i$. If $|S|=3$ and $n<p$ then Property $FM(n)$ holds if and only if one of the following conditions holds:
\begin{enumerate}[{\rm (a)}]
\item $m_{ij}=0$ for some $i,j$;
\item $m_{ij}\ne 0$ for all $i,j$ and $m_{ik}=m_{jk}$ for some $i,j,k$ with $i\ne j$;
\item $m_{ij}\ne0$ for all $i,j$ and $(m_{ik}-m_{jk})(m_{ki}m_{ij}-m_{kj}m_{ji})\ne0$ for some $i,j,k$.
\end{enumerate}
These conditions are independent of the choice of primitive roots.
\end{theorem}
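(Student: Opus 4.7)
The plan is to translate a representation of $\mathfrak{l}_S$ into a system of matrix equations, use $n<p$ to force each generator to act nilpotently, and then analyze the three-generator system by a case analysis on the $m_{ij}$.

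First, a homomorphism $\rho\colon\mathfrak{l}_S\to gl_n(\F_p)$ amounts to a triple $X_i=\rho(\xi_i)$ satisfying $c_iX_i+\sum_{j\ne i}\ell_{ij}[X_i,X_j]=0$. Dividing by $c_i\in\F_p^\times$ and setting $Y_i=\sum_{j\ne i}m_{ij}X_j$ rewrites the relations as
$$X_i=[X_i,Y_i],\qquad\text{equivalently}\qquad Y_iX_i=X_i(Y_i-1).$$
The intertwining shows that $X_i$ carries the generalized $\mu$-eigenspace of $Y_i$ over $\overline{\F_p}$ into the $(\mu-1)$-eigenspace; since $n<p$ the shifts $\mu,\mu-1,\ldots,\mu-(n-1)$ are distinct, so a dimension count in the generalized eigenspace decomposition forces $X_i^n=0$. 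Each $X_i$ is nilpotent with $\tr X_i=0$, and if $Y_i$ is itself nilpotent then $\ad Y_i$ cannot have eigenvalue $-1$, forcing $X_i=0$.

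Second, I prove sufficiency of (a), (b), (c) in turn. Case (a), say $m_{12}=0$: the first two relations give $[X_3,X_i]=-X_i/m_{i3}$ for $i=1,2$, and substituting into $\sigma_3=0$ places $X_3$ in the $\F_p$-span of $X_1,X_2$; feeding back yields $X_i=\alpha_i[X_1,X_2]$ for scalars $\alpha_i$, so $X_1$ and $X_2$ are proportional, commute, and all three matrices vanish. Case (b), say $m_{13}=m_{23}$: the combination $Z=m_{21}X_1+m_{12}X_2$ satisfies $Z=m_{13}[Z,X_3]$, so $Z$ is nilpotent by the shift argument, and a further use of $\sigma_3=0$ forces $X_3\in\F_p Z$ and reduces matters to case (a). Case (c) is the hardest: I view the three relations as a linear system in $A=[X_1,X_2]$, $B=[X_1,X_3]$, $C=[X_2,X_3]$ with determinant $\Delta=m_{12}m_{23}m_{31}-m_{13}m_{21}m_{32}$; when $\Delta\ne0$, Cramer's rule expresses each bracket as a linear combination of $X_1,X_2,X_3$, and the Jacobi identity applied to these expressions yields an additional polynomial constraint on the $m_{ij}$ that fails precisely when (c) holds, forcing further linear dependencies among the $X_i$ and trivializing the representation. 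The subcase $\Delta=0$ is handled separately by a direct rank argument combined with the shift identity.

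Third, I establish necessity by explicit construction. If none of (a), (b), (c) holds, then all $m_{ij}\ne0$, all $m_{ik}\ne m_{jk}$ for $i\ne j$, and $m_{ki}m_{ij}=m_{kj}m_{ji}$ for every cyclic triple; combining these identities with the inequalities forces the antisymmetry $m_{ik}=-m_{jk}$ for all $i\ne j$. I then exhibit a nontrivial two-dimensional representation by setting $v_1=e_1$, $v_2=e_2$, $v_3=(1,b)$ with $b\in\F_p^\times$ arbitrary, putting $X_i=\lambda_iN_{v_i}$ where $N_v\in sl_2$ is the rank-one nilpotent with image $\F_pv$ and $\lambda_1=1/(2bm_{21})$, $\lambda_2=-b/(2m_{12})$, $\lambda_3=1/(2bm_{13})$; a direct check using the antisymmetry confirms all three relations hold simultaneously, and since each $\lambda_i\ne0$ the representation is nontrivial. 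Invariance of (a), (b), (c) under the choice of primitive roots follows because replacing $g_j$ by $g_j^{u_j}$ scales $m_{ij}$ by $u_j^{-1}$, under which each of the three conditions is visibly preserved. The main obstacle is the sufficiency in case (c), where extracting the right consequence of Jacobi requires a careful computation whose outcome must match the two-factor product defining~(c).
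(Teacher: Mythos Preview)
Your nilpotency argument via eigenspace shifting is fine and equivalent to the paper's trace argument (Lemma~\ref{BB}). The trouble begins in the case analysis.

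\textbf{Case (a) is wrong as written.} You assume $m_{12}=0$ and then claim that ``the first two relations give $[X_3,X_i]=-X_i/m_{i3}$ for $i=1,2$''. The first relation does collapse to $X_1=m_{13}[X_1,X_3]$, but the second is still $X_2=m_{21}[X_2,X_1]+m_{23}[X_2,X_3]$ with $m_{21}$ not assumed to vanish, so you cannot read off $[X_3,X_2]$ in terms of $X_2$ alone. You also divide by $m_{13},m_{23}$ without knowing they are nonzero. The paper's argument here is simply: $X_1=[X_1,m_{13}X_3]$ with $X_3$ nilpotent, so $\ad(m_{13}X_3)$ is nilpotent, hence $I+\ad(m_{13}X_3)$ is invertible and $X_1=0$; this reduces to $|S|=2$.

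\textbf{Case (c) is not actually proved.} You propose inverting the $3\times3$ system for the brackets via Cramer and then invoking Jacobi to obtain ``an additional polynomial constraint on the $m_{ij}$ that fails precisely when (c) holds''. But the Jacobi identity holds automatically in $gl_n(\F_p)$, so feeding your Cramer expressions back through Jacobi can only produce a linear relation among $X_1,X_2,X_3$ whose coefficients are rational functions of the $m_{ij}$; you would then need to show these coefficients are not all zero exactly when (c) holds, and you have not done this computation (you say as much). The paper bypasses all of this: under (c), with say $m_{23}\ne m_{13}$ and $m_{32}m_{21}\ne m_{31}m_{12}$, one can choose explicit nonzero $a,b,\lambda\in\F_p$ (namely $a=(m_{32}m_{21}-m_{31}m_{12})/\bigl(m_{12}(m_{23}-m_{13})\bigr)$, $b=m_{12}a/m_{21}$, and $\lambda$ determined by $am_{13}-m_{31}=\lambda a$) so that $aX_1+bX_2+X_3=[\,aX_1+bX_2+X_3,\ \lambda X_3\,]$; nilpotence of $X_3$ then forces $aX_1+bX_2+X_3=0$, giving the desired linear dependence directly. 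The two hypotheses in (c) are exactly what is needed for $a$ and $b$ to be well-defined and nonzero.

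Your case (b) and the necessity direction are along the right lines, though in (b) the conclusion should be $Z=0$ (not merely $Z$ nilpotent), which is what the nilpotence of $\ad X_3$ gives you.
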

\begin{theorem}
If $|S|=3$ and $n<p$ then $\mathfrak l_S$ fails to have Property $FM(n)$ if and only if  $\ell_{ij}\ne0$ for all $i,j$ and $ \ell_{13}/c_1=-\ell_{23}/c_2,\ \ell_{21}/c_2=-\ell_{31}/c_3,\ \ell_{12}/c_1=-\ell_{32}/c_3$.
\end{theorem}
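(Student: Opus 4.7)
The plan is to derive this theorem directly from Theorem~\ref{S3} as a clean reformulation of the simultaneous failure of conditions (a), (b), (c). Throughout I write $m_{ij}=-\ell_{ij}/c_i$, using that $c_i\ne 0$ in $\F_p$ since the primes of $S$ are assumed to satisfy $q_i\not\equiv 1\bmod p^2$. Since Property $FM(n)$ holds iff one of (a), (b), (c) holds, $\mathfrak l_S$ fails Property $FM(n)$ iff each of (a), (b), (c) fails.

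Negating (a) gives $m_{ij}\ne 0$ for every ordered pair $i\ne j$, which is exactly the assertion $\ell_{ij}\ne 0$ for all $i,j$. Assuming (a) fails, the negation of (b) becomes the statement that $m_{ik}\ne m_{jk}$ for every choice of distinct $i,j,k\in\{1,2,3\}$, and the negation of (c) becomes $(m_{ik}-m_{jk})(m_{ki}m_{ij}-m_{kj}m_{ji})=0$ for all distinct $i,j,k$. Combining the two non-vanishing conditions forces the three identities
\[
m_{31}m_{12}=m_{32}m_{21},\qquad m_{12}m_{23}=m_{13}m_{32},\qquad m_{23}m_{31}=m_{21}m_{13}.
\]

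The main step is a short piece of algebra: multiplying these three relations and cancelling the common nonzero factor $m_{13}m_{21}m_{23}m_{31}$ yields $m_{12}^2=m_{32}^2$, so $m_{32}=\pm m_{12}$; the possibility $m_{32}=m_{12}$ is ruled out by the non-equality $m_{12}\ne m_{32}$ (here one uses $p\ne 2$), hence $m_{32}=-m_{12}$. Substituting back into the first and third relations pins down $m_{31}=-m_{21}$ and $m_{23}=-m_{13}$; translating via $m_{ij}=-\ell_{ij}/c_i$ gives the three stated equalities $\ell_{13}/c_1=-\ell_{23}/c_2$, $\ell_{21}/c_2=-\ell_{31}/c_3$, $\ell_{12}/c_1=-\ell_{32}/c_3$. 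Conversely, these three sign conditions together with $\ell_{ij}\ne 0$ immediately verify all three bilinear identities and, since $p\ne 2$, the required non-equalities $m_{ik}\ne m_{jk}$; so (a), (b), (c) all fail and Property $FM(n)$ fails.

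I do not anticipate a real obstacle here, since once Theorem~\ref{S3} is in hand the whole argument is a quantifier manipulation followed by the three-line computation above. The only step demanding care is the bookkeeping when negating (b) and (c) — in particular, recognizing that with $|S|=3$ the quantifier ``for some $i,j,k$'' and ``for all $i,j,k$'' range over only three ordered triples up to swapping $i\leftrightarrow j$, so the negated conditions genuinely produce the three symmetric identities displayed above rather than a strictly weaker set.
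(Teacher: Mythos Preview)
Your argument is correct and follows the same route as the paper: both negate conditions (a), (b), (c) of Theorem~\ref{S3} to obtain the three identities $m_{ki}m_{ij}=m_{kj}m_{ji}$ and then deduce $m_{ik}=-m_{jk}$ for each $k$; the paper organizes this last step via ratios $k_1,k_2,k_3$ satisfying $k_ik_j=1$ and $k_i\ne1$, hence $k_i=-1$, which is exactly your computation in different clothing. One small slip in your write-up: after multiplying the three identities the two sides are $m_{12}^2m_{23}^2m_{31}^2$ and $m_{32}^2m_{13}^2m_{21}^2$, which do \emph{not} share the factor $m_{13}m_{21}m_{23}m_{31}$; what you actually need is to invoke the third identity once more (giving $m_{23}^2m_{31}^2=m_{13}^2m_{21}^2$) before cancelling, and then the conclusion $m_{12}^2=m_{32}^2$ follows as you say.
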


\begin{theorem}\label{FM2}
Let $\rho:G_S\rightarrow{\rm GL}_2(\Z_p)$ be a continuous homomorphism. Then $\overline\rho=1$ if $\rho$ can be brought to triangular form mod $p^3$.
\end{theorem}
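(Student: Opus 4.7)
The plan is to leverage the standing hypothesis that every $q_i\in S$ is $\equiv 1\pmod p$ but $\not\equiv 1\pmod{p^2}$, which the paper has already noted is equivalent to $G_S^{\ab}=G_S/[G_S,G_S]$ being elementary abelian. The argument will be a double bootstrap: mod-$p^3$ triangularity first forces the diagonal of $\rho$ to be trivial mod $p^2$, after which mod-$p^2$ upper unipotency forces the off-diagonal entry to vanish mod $p$.

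First, after conjugating by a suitable element of ${\rm GL}_2(\Z/p^3)$, I may assume
$$
\rho(g)\equiv\begin{pmatrix} a(g) & b(g)\\ 0 & d(g)\end{pmatrix}\pmod{p^3},
$$
so that $a,d\colon G_S\to(\Z/p^3)^*$ are continuous characters. Each factors through $G_S^{\ab}$, which has exponent $p$, and hence takes values in the $p$-torsion of $(\Z/p^3)^*$. Since $p$ is odd, $(\Z/p^3)^*$ is cyclic of order $p^2(p-1)$ and its $p$-torsion is exactly $1+p^2(\Z/p^3)$; thus $a,d\equiv 1\pmod{p^2}$, and $\rho$ is already upper unipotent mod $p^2$.

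Next, the upper-right entry $g\mapsto b(g)\bmod p^2$ is now a genuine additive homomorphism $G_S\to\Z/p^2$. Applying the same exponent-$p$ principle a second time---any homomorphism out of $G_S^{\ab}$ must land in the $p$-torsion of its target---gives $b(g)\in p(\Z/p^2)$ for every $g$, i.e.\ $b(g)\equiv 0\pmod p$. Combined with $a,d\equiv 1\pmod p$, this yields $\overline\rho=1$.

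I do not foresee a genuine obstacle. The only substantive input is the class-field-theoretic fact that $G_S^{\ab}$ is elementary abelian, which is built into the standing hypothesis on $S$; the rest is a double application of the slogan that a homomorphism out of a group of exponent $p$ lands in the $p$-torsion of its target. The mod-$p^3$ assumption is precisely the amount of information needed for this two-step bootstrap to run---the same argument would fail with only mod-$p^2$ triangularity---so the statement appears sharp in this respect.
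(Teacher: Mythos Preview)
Your argument is correct and considerably more direct than the paper's own proof. The paper proceeds by contradiction: assuming $\overline\rho\ne1$, it first reduces (via Corollary~\ref{mex}) to the case where $G_S$ is mild, arranges $\overline H=\overline\rho(G_S)$ to be generated by $\left(\begin{smallmatrix}1&1\\0&1\end{smallmatrix}\right)$, and then builds a $J$-adic filtration on $M=\Z_p^2$ whose associated graded is a free $\F_p[\pi]$-module. From this it extracts a nontrivial $\F_p[\pi]$-algebra homomorphism $\phi:\gr(\Z_p[[G_S]])\to gl_1(\F_p[\pi])$, uses mildness to identify $\gr(\Z_p[[G_S]])$ with the enveloping algebra of $\gr(G_S)$, and finally reads off a contradiction from the explicit Koch relators $\rho_i=c_i\pi\xi_i+\sum\ell_{ij}[\xi_i,\xi_j]$. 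Several commutator lemmas (Lemmas~\ref{s1}--\ref{s4}) are needed along the way to show that the map $\phi$ is actually nontrivial.

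Your route bypasses all of this machinery: it uses nothing about $G_S$ beyond the standing hypothesis that $G_S^{\ab}$ has exponent~$p$, applied once multiplicatively to push the diagonal characters into $1+p^2(\Z/p^3)$ and then once additively to push the upper-right entry into $p(\Z/p^2)$. The paper's approach has the merit of illustrating the graded/Lie-theoretic framework developed for Theorems~\ref{main} and~\ref{main2}; yours has the merit of being essentially a two-line argument that requires neither mildness nor any structure theory of $\gr(G_S)$.
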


The pro-$p$-groups $G_S$ are very mysterious. They are all fab groups, i.e., subgroups of finite index have finite abelianizations, and for $|S|\ge4$ they are not $p$-adic analytic. So far no one has given a purely algebraic construction of such a pro-$p$-group. We call a pro-$p$-group $G$ a Fontaine-Mazur group if every continuous homomorphism of $G$ into ${\rm GL}_n(\Z_p)$ is finite. Again, no purely algebraic construction of such a group exists. In this direction we have the following result.

\begin{theorem}\label{wfmg}
Let $G$ be the pro-$p$-group with generators $x_1,\ldots,x_{2m}$ and relations
$$
x_1^{pc_1}[x_1,x_2]=1,\ x_2^{pc_2}[x_2,x_3]=1,\ldots,\ x_{2m-1}^{pc_{2m-1}}[x_{2m-1,2m}]=1,\ x_{2m}^{pc_{2m}}[x_{2m},x_1]=1
$$
with $c_i\not\equiv 0$ mod $p>2$ and $m\ge2$. Then every continuous homomorphism of $G$  into ${\rm GL}_n^{(1)}(\Z_p)$ is trivial if $n<p$.
\end{theorem}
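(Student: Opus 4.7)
The plan is to follow the two-step strategy of Theorems~\ref{main} and~\ref{main2}: first associate to $G$ a Lie algebra $\mathfrak l$ whose representations control the continuous homomorphisms $\rho:G\to{\rm GL}_n^{(1)}(\Z_p)$, and second show that $\mathfrak l$ has Property $FM(n)$ whenever $n<p$. I would take $\mathfrak l$ to be the $\F_p$-Lie algebra generated by $\xi_1,\ldots,\xi_{2m}$ with relations $c_i\xi_i+[\xi_i,\xi_{i+1}]=0$ for $i=1,\ldots,2m$ (indices mod $2m$). Using the filtration $U_k=1+p^k\mathfrak{gl}_n(\Z_p)$ of ${\rm GL}_n^{(1)}(\Z_p)$ together with the identities $(1+pX)^p\equiv 1+p^2X\pmod{p^3}$ for $p$ odd and $[1+pX,1+pY]\equiv 1+p^2[X,Y]\pmod{p^3}$, the image of the relation $x_i^{pc_i}[x_i,x_{i+1}]=1$ under $\rho$ collapses mod $p^3$ to $c_iX_i+[X_i,X_{i+1}]\equiv 0\pmod p$, where $\rho(x_i)=1+pX_i+O(p^2)$. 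Thus $\xi_i\mapsto X_i\bmod p$ defines a Lie algebra homomorphism $\ell(\rho):\mathfrak l\to\mathfrak{gl}_n(\F_p)$.

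Next I would establish that $\mathfrak l$ has Property $FM(n)$ for $n<p$. Let $\phi:\mathfrak l\to\mathfrak{gl}_n(\F_p)$ and set $A_i=\phi(\xi_i)$; the defining relations give $[A_{i+1},A_i]=c_iA_i$ cyclically. The hard step is to show each $A_i$ is nilpotent. Letting $V=\F_p^n$ and $W=\bigcap_{k\ge 0}A_i^kV$, the operator $A_i$ restricts to an automorphism of $W$, and an induction on $k$ using $A_{i+1}A_i=A_iA_{i+1}+c_iA_i$ shows that each $A_i^kV$ is $A_{i+1}$-stable, hence so is $W$. Restricting $[A_{i+1},A_i]=c_iA_i$ to $W$, multiplying by $A_i^{-1}|_W$, and taking trace yields $c_i\dim W\equiv 0\pmod p$; since $c_i\not\equiv 0\pmod p$ and $\dim W\le n<p$, this forces $W=0$, proving $A_i$ nilpotent. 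Consequently $\ad(A_{i+1})=L_{A_{i+1}}-R_{A_{i+1}}$ is nilpotent on $\mathfrak{gl}_n(\F_p)$, so $0$ is its only eigenvalue; the equation $\ad(A_{i+1})(A_i)=c_iA_i$ with $c_i\not\equiv 0$ then forces $A_i=0$ for every $i$.

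To finish I would show that $\ell(\rho)=0$ implies $\rho=1$ by climbing the filtration. The vanishing of $\ell(\rho)$ places each $\rho(x_i)$ in $U_2$. Inductively, if $\rho(x_i)\in U_k$ with $k\ge 2$ and $\rho(x_i)=1+p^kZ_i+O(p^{k+1})$, the identities above give $\rho(x_i)^{pc_i}\equiv 1+p^{k+1}c_iZ_i\pmod{p^{k+2}}$, while $[\rho(x_i),\rho(x_{i+1})]\in[U_k,U_k]\subseteq U_{2k}\subseteq U_{k+2}$; the relation $x_i^{pc_i}[x_i,x_{i+1}]=1$ then reduces to $c_iZ_i\equiv 0\pmod p$, forcing $Z_i\equiv 0\pmod p$ and $\rho(x_i)\in U_{k+1}$. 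Since $\bigcap_kU_k=\{1\}$, this gives $\rho=1$. The main obstacle in this program is the nilpotency argument for the $A_i$; once that is in place, the Lie correspondence and the filtration climb are essentially formal manipulations in ${\rm GL}_n^{(1)}(\Z_p)$.
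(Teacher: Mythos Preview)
Your proposal is correct and follows the paper's architecture exactly: pass to the Lie relations $c_i\overline A_i+[\overline A_i,\overline A_{i+1}]=0$ in $\mathfrak{gl}_n(\F_p)$, show each $\overline A_i$ is nilpotent, deduce $\overline A_i=0$ from the eigenvalue equation $\ad(\overline A_{i+1})(\overline A_i)=c_i\overline A_i$, and then conclude $\rho=1$. The only substantive deviation is in how you obtain nilpotency. The paper quotes Lemma~\ref{BB} (if $A=[A,B]$ and $n<p$ then $A$ is nilpotent), proved by showing $\tr(A^k)=0$ for all $k\ge1$ over an extension field and reading off the characteristic polynomial; your Fitting-type argument on $W=\bigcap_k A_i^kV$ together with the trace identity $\tr(A_{i+1}-A_iA_{i+1}A_i^{-1})=c_i\dim W$ is a valid and slightly more self-contained alternative that stays over $\F_p$. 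Your final filtration climb is just the proof of Lemma~\ref{H} specialized to the given relations (the paper invokes that lemma directly, using that $G/[G,G]$ is elementary), so there is no real difference there.
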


\section{Mild pro-$p$-groups}
Let $G$ be a pro-$p$-group. The descending central series of $G$ is the sequence of subgroups $G_n$ defined for $n\ge1$ by
$$
G_1=G,\quad G_{n+1}=G_n^p[G,G_n]
$$
where $G_n^p[G,G_n]$ is the closed subgroup of $G$ generated by $p$-th powers of elements of $G_n$ and commutators of the form $[h,k]=h^{-1}k^{-1}hk$ with $h\in G$ and $k\in G_n$. The graded abelian group
$$
\gr(G)=\oplus_{n\ge1}\gr_n(G)=\oplus_{n\ge1}G_n/G_{n+1}
$$
is a graded vector space over $\F_p$ where $\gr_n(G)$ is denoted additively. We let $$
\iota_n :G_n\rightarrow\gr_n(G)
$$
be the quotient map. Since $p\ne2$, the graded vector space $\gr(G)$ has the structure of a graded Lie algebra over $\F_p[\pi]$ where
$$
\pi\,\iota_n(x)=\iota_{n+1}(x^p),\quad [\iota_n(x),\iota_m(y)]=\iota_{n+m}([x,y]).
$$

Let $G=F/R$ where $F$ is the free pro-$p$-group on $x_1,\ldots,x_d$ and $R=(r_1,\ldots,r_m)$ is the closed normal subgroup of $F$ generated by $r_1,\ldots,r_m$ with $r_i\in F_2$. If
$$
r_k\equiv\prod_{i\ge1}x_i^{pa_j}\prod_{i<j}[x_i,x_j]^{a_{ijk}}\ \ {\rm mod}\ F_3
$$
and we let $\xi_i=\iota_1(x_1)$, $\rho_k=\iota_2(r_k)$ in $L=\gr(F)$ then $L$ is the free Lie algebra over $\F_p[\pi]$ on $\xi_1,\ldots,\xi_d$ and
$$
\rho_k=\sum_{i\ge1}a_i\pi\xi_i+\sum_{i<j}a_{ijk}[\xi_i,\xi_j].
$$
Let $\mathfrak r$ be the ideal of $L$ generated by $\rho_1,\ldots\rho_m$, let $\mathfrak g=L/\mathfrak r$ and let $U$ be the enveloping algebra of $\mathfrak g$. Then $\mathfrak r/[\mathfrak r,\mathfrak r]$ is a $U$-module via the adjoint representation. The sequence $\rho_1,\ldots,\rho_m$ is said to be {\bf strongly free} if (a) $\mathfrak g$ is a torsion-free $\F_p[\pi]$-module and (b) $\mathfrak r/[\mathfrak r,\mathfrak r]$ is a free $U$-module on the images of $\rho_1,\ldots,\rho_m$ in which case we say that the presentation is strongly free.
\begin{theorem}[\cite{La}, Theorem 1.1]
If $G=F/R$ is strongly free then $\mathfrak r$ is the kernel of the canonical surjection $\gr(F)\rightarrow\gr(G)$ so that $\gr(G)=L/\mathfrak r$.
\end{theorem}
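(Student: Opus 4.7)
The plan is a Hilbert-series argument. The map $F\twoheadrightarrow G$ is filtration-preserving and each $r_k\in F_2$ has initial form $\rho_k=\iota_2(r_k)$ which maps to zero in $\gr(G)$; thus the ideal $\mathfrak r=(\rho_1,\ldots,\rho_m)$ lies in the kernel of the induced graded map, yielding a surjection $\phi:\mathfrak g=L/\mathfrak r\twoheadrightarrow\gr(G)$. The theorem is equivalent to the injectivity of $\phi$. Passing to (restricted) enveloping algebras — for $p$ odd, the associated graded of $\F_p[[F]]$ and $\F_p[[G]]$ under the augmentation filtration are the enveloping algebras of $\gr(F)$ and $\gr(G)$ respectively — and using PBW over $\F_p[\pi]$ (which is available for $\mathfrak g$ by strong freeness (a)), it suffices to show that the induced graded algebra surjection $\Phi: U\twoheadrightarrow \gr(\F_p[[G]])=:B$ is an isomorphism.

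Strong freeness (b) provides an explicit free resolution of the trivial left $U$-module $\F_p$:
$$
0\to\bigoplus_{k=1}^m U e_k \xrightarrow{\,d_2\,}\bigoplus_{i=1}^d U f_i \xrightarrow{\,d_1\,} U \xrightarrow{\,\varepsilon\,}\F_p\to 0,
$$
where $d_1(f_i)=\xi_i$ and $d_2(e_k)$ represents $\rho_k$ via the standard Koszul identification of $\mathfrak r/[\mathfrak r,\mathfrak r]$ with $\ker(d_1)$; condition (b) is precisely the injectivity of $d_2$, while exactness elsewhere is the usual Koszul computation. Taking alternating sums of Hilbert series gives $H_U(t)\,(1-dt+mt^2)=1$. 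On the other hand, Fox calculus applied to the presentation $G=F/R$ yields a right-exact complex $B^m\to B^d\to B\to \F_p\to 0$ of left $B$-modules, from which a coefficient-wise dimension count (the Hilbert-series form of the Golod-Shafarevich inequality) gives $H_B(t)(1-dt+mt^2)\ge 1$. Combined with $H_B\le H_U$ from surjectivity of $\Phi$, both inequalities become equalities, forcing $H_B=H_U$ and hence $\Phi$ an isomorphism. Lifting back through PBW, $\phi$ is injective, so $\mathfrak r=\ker(\gr(F)\to\gr(G))$ as claimed.

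The main obstacle is executing this program in the $\F_p[\pi]$-graded restricted setting rather than in Anick's purely associative setting. One must verify that PBW for $L/\mathfrak r$ over $\F_p[\pi]$ genuinely holds under conditions (a) and (b), that $\gr(\F_p[[G]])$ really coincides with the restricted enveloping algebra of $\gr(G)$ for the descending central $p$-series (this is where the hypothesis $p>2$ is used so that the Jennings-Lazard-Quillen identification goes through), and that the Fox complex descends to associated gradeds without spoiling the coefficient-wise inequality. Once these foundational identifications are in place the Euler-characteristic count runs cleanly; conditions (a) and (b) of strong freeness are used jointly, (a) to kill $\pi$-torsion obstructions to PBW and (b) to force the Koszul resolution to be exact at its leftmost term.
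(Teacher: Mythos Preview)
The paper does not contain a proof of this statement: it is quoted from \cite{La}, Theorem~1.1, and no argument is reproduced here. So there is nothing in the present paper to compare your proposal against.

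For what it is worth, your outline is essentially the strategy of \cite{La}: bound the Poincar\'e series of $\gr(\Z_p[[G]])$ from below by a Golod--Shafarevich--type inequality coming from the presentation, bound the Poincar\'e series of $U$ from above via the exact Koszul-type complex that strong freeness (b) makes exact on the left (yielding $H_U(t)(1-dt+mt^2)=1$), and squeeze using the surjection $U\twoheadrightarrow\gr(\Z_p[[G]])$. One correction: the completed group algebra in play is $\Z_p[[G]]$, not $\F_p[[G]]$. The element $\pi$ is the image of $p\in I/I^2$, so passing to $\F_p$-coefficients before taking the associated graded would kill $\pi$ and destroy the $\F_p[\pi]$-module structure you need; the Jennings--Lazard--Quillen identification you invoke concerns the Zassenhaus filtration and $\F_p[[G]]$, which is not quite the setup here (lower $p$-central series and $\Z_p[[G]]$). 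In \cite{La} these filtration compatibilities are checked directly rather than imported from Quillen's theorem. The issues you flag in your final paragraph are exactly the technical content of the cited paper.
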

A finitely presented pro-$p$-group $G$ is said to be {\bf mild} if it has a strongly free presentation.

Let $A=\Z_p[[G]]$ be the completed algebra of $G$ and let $I={\rm Ker}( A\rightarrow\F_p)$ be the augmentation ideal of $\Z_p[[G]]$. Then
$$
\gr(A)=\oplus_{n\ge1} I^n/I^{n+1}
$$
is a graded algebra over $\F_p[\pi]$ where $\pi$ can be identified with the image of $p$ in $I/I^2$. The canonical injection of $G$ into $A$ sends $G_n$ into $1+I^n$ and gives rise to a canonical Lie algebra homomorphism of $\gr(G)$ into $\gr(A)$ which is injective if and only if $G_n=G\cap(1+I^n)$.
\begin{theorem}[\cite{La}, Theorem 1.1]
If $G$ is mild the canonical map $\gr(G)\rightarrow\gr(A)$ is injective and $\gr(A)$ is the enveloping algebra of $\gr(G)$.  Moreover, $R/[R,R]$ is a free $A$-module which implies that ${\rm cd}(G)\le2$.
\end{theorem}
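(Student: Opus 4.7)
The plan is to pass to the associated graded algebra and exploit strong freeness via a small free resolution of $\F_p$ over $U(\mathfrak g)$. Write $\tilde A=\Z_p[[F]]$ and let $\mathfrak R\subset\tilde A$ be the closed two-sided ideal generated by $r_k-1$, so that $A=\tilde A/\mathfrak R$. The augmentation filtration identifies $\gr(\tilde A)$ with $\tilde U:=U(L)$, the enveloping algebra of the free $\F_p[\pi]$-Lie algebra $L$ on $\xi_1,\ldots,\xi_d$. Because the initial form of $r_k-1$ in $\tilde U$ is $\rho_k$, the initial ideal of $\mathfrak R$ contains the two-sided ideal $\tilde U\mathfrak r\tilde U$, giving a canonical surjection
\[
\varphi:U(\mathfrak g)=\tilde U/\tilde U\mathfrak r\tilde U\twoheadrightarrow \gr(A).
\]
All four conclusions will follow once $\varphi$ is shown to be an isomorphism.

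The crux of the argument is a PBW-style computation that, using $\F_p[\pi]$-torsion-freeness of $\mathfrak g$, deduces from freeness of $\mathfrak r/[\mathfrak r,\mathfrak r]$ over $U(\mathfrak g)$ that $\mathfrak R/\mathfrak R^2$ is a free left $U(\mathfrak g)$-module of rank $m$ on the images of $\rho_1,\ldots,\rho_m$. Combined with the standard presentation of the augmentation ideal of $U(\mathfrak g)$ by $\pi,\xi_1,\ldots,\xi_d$, this assembles into an exact sequence of graded $U(\mathfrak g)$-modules
\[
0\to U(\mathfrak g)^m\xrightarrow{\partial_2}U(\mathfrak g)^{d+1}\xrightarrow{\partial_1}U(\mathfrak g)\xrightarrow{\varepsilon}\F_p\to 0,
\]
in which $\partial_1$ sends the basis vectors to $\pi,\xi_1,\ldots,\xi_d$ and $\partial_2$ is determined by the Fox-style Lie derivatives of the $\rho_k$. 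This is the main technical step; everything else is formal bookkeeping.

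The final step is to lift to $A$. One constructs a complex of complete free $A$-modules
\[
0\to A^m\xrightarrow{d_2}A^{d+1}\xrightarrow{d_1}A\to \Z_p\to 0
\]
from the presentation, with $d_1$ sending the $A$-basis to $p,x_1-1,\ldots,x_d-1$ and $d_2$ determined by the Fox Jacobians of the $r_k$, so that its associated graded is the complex of the previous paragraph. A standard filtered-completeness argument shows that exactness of the graded complex lifts to exactness of the $A$-complex, and this simultaneously yields: \emph{(iii)} $R/[R,R]$ is identified with the leftmost $A^m$ via $r_k\mapsto e_k$ and is therefore $A$-free; \emph{(iv)} $\cd(G)\le 2$, since the $A$-complex is a length-two free resolution of $\Z_p$; and \emph{(ii)} $\varphi$ is an isomorphism, since the lift forces $\gr(\mathfrak R)=\tilde U\mathfrak r\tilde U$. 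For \emph{(i)}, once $\gr(A)=U(\mathfrak g)$, the PBW theorem embeds $\mathfrak g$ as a Lie subalgebra of $U(\mathfrak g)=\gr(A)$; this embedding restricts on $\xi_i$ to the canonical map $\mathfrak g\to\gr(G)\subset\gr(A)$, forcing $\mathfrak g\xrightarrow{\sim}\gr(G)$ and hence the composite $\gr(G)\hookrightarrow\gr(A)$ to be injective. The main obstacle throughout is the enveloping-algebra lifting step: deducing two-sided-ideal freeness of $\mathfrak R/\mathfrak R^2$ from Lie-ideal freeness of $\mathfrak r/[\mathfrak r,\mathfrak r]$ is precisely where both conditions of strong freeness are genuinely used.
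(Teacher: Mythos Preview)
This paper does not prove the statement; it is quoted from \cite{La}, Theorem~1.1, and no argument is supplied here. There is therefore nothing in the present paper to compare your proposal against.

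Your outline is nonetheless the standard route, and essentially that of \cite{La}: use strong freeness to produce a length-two free resolution of the trivial module over $U(\mathfrak g)$, lift it to a complex of free $A$-modules by a filtered-completeness argument, and read off all four conclusions from exactness of the lift. You have correctly located the technical weight in the passage from $U$-freeness of $\mathfrak r/[\mathfrak r,\mathfrak r]$ to injectivity of $\partial_2$; that is exactly where both clauses of strong freeness are consumed.

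Two bookkeeping slips. First, in your $A$-complex you write the augmentation as $A\to\Z_p$ but take $A^{d+1}$ with $d_1$ hitting $p,x_1-1,\ldots,x_d-1$; these are inconsistent, since $p\notin\ker(A\to\Z_p)$. The clean choice is to resolve $\Z_p$ using $A^d$ with basis mapping to $x_1-1,\ldots,x_d-1$; this is the Fox resolution, its second syzygy is precisely $R/[R,R]$, and freeness of the latter then gives $\cd(G)\le 2$ directly. (Correspondingly, at the graded level resolve $\F_p[\pi]$ with $U^d$, not $\F_p$ with $U^{d+1}$.) Second, in your deduction of (i) you write ``$\gr(G)\subset\gr(A)$'' before it is established; what you actually mean is that the composite $\mathfrak g\twoheadrightarrow\gr(G)\to\gr(A)=U(\mathfrak g)$ coincides with the PBW embedding and is therefore injective, which then forces each factor to be injective.
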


We now give a criterion for the mildness of $G=G_S$ when $p\ne 2$ and $p\notin S$. The group $G_S$ has a presentation $F(x_1,\ldots,x_d)/(r_1,\ldots,r_d)$
where $x_i$ is a lifting of a generator of an inertia group at $q_i$ and
$$
r_i=x_i^{pc_i}\prod_{j\ne i}[x_i,x_j]^{\ell_{ij}}\ \ {\rm mod}\ F_3
$$
which is due to Helmut Koch~(\cite{Ko}, Example 11.11). Using the transpose of the inverse of the transgression isomorphism
$$
{\rm tg}:H^1(R,\F_p)^F=(R/R^p[R,F])^*\longrightarrow H^2(G,\F_p),
$$
the relator $r_i$ defines a linear form $\phi_i$ on $H^2(G,\F_p))$ such that, if $\chi_1,\ldots,\chi_d$ is the basis of $H^1(F,\F_p)=(F/F^p[F,F])^*$ with $\chi_i(x_j)=\delta_{ij}$, we have $\phi_i(\chi_i\cup\chi_j)=-\ell_{ij}$ if $i<j$; cf.\cite{Ko}, Theorem 7.23.

The set $S$ is said to be a {\bf circular set} of primes if there is an ordering $q_1,\ldots,q_d$ of the set $S$ such that

(a) $\ell_{i,i+1}\ne0$ for $1\le i<d$ and $\ell_{d1}\ne 0$,

(b) $\ell_{ij}=0$ if $i,j$ are odd,

(c) $\ell_{12}\ell_{23}\cdots\ell_{d-1,d}\ell_{d1}\ne\ell_{1m}\ell_{m,m-1}\cdots \ell_{32}\ell_{21}$.
\begin{theorem}\label{gsm}
If $S$ is a circular set of primes then $G_S$ is mild.
\end{theorem}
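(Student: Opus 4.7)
We work at the level of the graded Lie algebra. Let $L$ be the free Lie algebra over $\F_p[\pi]$ on $\xi_1,\ldots,\xi_d$; Koch's presentation of $G_S$ recalled above gives, in degree $2$, the elements
$$
\rho_i = c_i\pi\xi_i + \sum_{j\ne i}\ell_{ij}[\xi_i,\xi_j],\qquad 1\le i\le d.
$$
By the definition of mildness it suffices to show that the sequence $\rho_1,\ldots,\rho_d$ is strongly free in $L$. My plan is to reduce this to a purely quadratic question over $\F_p$, then to verify an Anick-type Hilbert-series criterion, and finally to use the three circular hypotheses in sequence.

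First, I would pass to the leading (purely quadratic) relators $\bar\rho_i = \sum_{j\ne i}\ell_{ij}[\xi_i,\xi_j]$ in the free $\F_p$-Lie algebra $\bar L$ on $\xi_1,\ldots,\xi_d$. By the standard lifting principle for strongly free sequences (cf.~\cite{La}), the $\pi\xi_i$ summands of $\rho_i$ are of strictly lower weight once $\pi$ is given its own filtration, so strong freeness of $\{\rho_i\}$ over $\F_p[\pi]$ is implied by strong freeness of $\{\bar\rho_i\}$ over $\F_p$. After this reduction, strong freeness is equivalent to the Hilbert series of the enveloping algebra of $\bar L/(\bar\rho_1,\ldots,\bar\rho_d)$ attaining the Golod-Shafarevich lower bound $(1-dt+dt^2)^{-1}$, equivalently to the exactness of the Koszul-type complex governing the quadratic algebra presented by $\bar\rho_1,\ldots,\bar\rho_d$.

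The three hypotheses then enter in the following way. Condition (b), the vanishing of $\ell_{ij}$ for $i,j$ both odd, forces $d$ to be even (from (a) applied to the edge $\{d,1\}$) and produces a bipartite refinement of the grading on $\bar L$ in which each $\bar\rho_i$ is homogeneous of bidegree $(1,1)$; this splits the quadratic algebra cleanly into even/odd blocks and removes the half of the potential relations that would make the Koszul complex fail to be exact a priori. Condition (a) provides a spanning cycle of non-zero brackets used to build an explicit semi-basis for $\bar L/(\bar\rho_1,\ldots,\bar\rho_d)$ in each degree. Condition (c), the asymmetry $\ell_{12}\ell_{23}\cdots\ell_{d1}\ne \ell_{1d}\ell_{d,d-1}\cdots\ell_{21}$, is the non-degeneracy that prevents a would-be extra relation (the iterated bracket around the cycle in the two possible directions) from becoming an unexpected element of the ideal.

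I expect the main obstacle to be the last step: translating the cyclic-product inequality (c) into the statement that a specific map of graded $\F_p$-modules, built from the $d\times d$ matrix $(\ell_{ij})$ restricted to the cycle, is injective in the relevant degree. Concretely, one must show that the "parasitic" relation obtained by traversing the cycle clockwise is a non-zero multiple of the one obtained counter-clockwise precisely when the two products in (c) coincide, so that (c) forces their independence and the Hilbert-series equality. Once this identification is made, Koszul exactness follows, strong freeness of $\bar\rho_1,\ldots,\bar\rho_d$ is established, and mildness of $G_S$ is obtained by the lifting reduction of the first step.
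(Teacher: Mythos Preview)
The paper does not prove Theorem~\ref{gsm} here at all; immediately after its statement the text reads ``See Labute~(\cite{La}, Theorem~1.1) for the proof of Theorem~\ref{gsm},'' so there is no in-paper argument to compare your proposal against.

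Your plan is broadly aligned with the strategy of \cite{La}: pass from the $\F_p[\pi]$-relators $\rho_i$ to the purely quadratic relators $\bar\rho_i=\sum_{j\ne i}\ell_{ij}[\xi_i,\xi_j]$ in the free $\F_p$-Lie algebra, and verify strong freeness there via an Anick/Hilbert-series criterion on the enveloping algebra. Your reading of the three hypotheses is accurate: (b) together with $\ell_{d1}\ne0$ from (a) forces $d$ to be even and yields an odd/even bipartition of the generators so that each $\bar\rho_i$ is homogeneous of bidegree $(1,1)$; (a) supplies the non-vanishing cycle edges; and (c) is the non-degeneracy condition distinguishing the two orientations of the cycle.

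That said, what you have written is, by your own description, a plan rather than a proof. The decisive step---converting the inequality in (c) into the actual injectivity statement that makes the enveloping-algebra Poincar\'e series equal to $(1-dt+dt^2)^{-1}$---is named as ``the main obstacle'' but not carried out. In \cite{La} this is handled by a concrete combinatorial argument (choosing leading monomials for the $\bar\rho_i$ and checking Anick's overlap/combinatorial-freeness condition), and that computation is the genuine content of the theorem. So the gap lies exactly where you flag it: until the monomial/overlap verification is actually performed, the proposal does not establish strong freeness, and the heuristic that (c) ``should'' control the only possible dependence is not yet a proof.
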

\begin{theorem}\label{cs1}
The set $S$ can be extended to a set $S\cup{q}$ where $q\equiv1$ mod $p$, $q\not\equiv1$ mod $p^2$ in such a way that the pairs $(q,q_i)$, $(q_i,q)$ with non-zero linking numbers can be arbitrarily prescribed.
\end{theorem}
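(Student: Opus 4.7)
The plan is to apply the Chebotarev density theorem to a carefully chosen number field. Let $L_1=\Q(\zeta_{p^2 q_1\cdots q_d})$ and set $L=L_1(q_1^{1/p},\ldots,q_d^{1/p})$. For a rational prime $q$ unramified in $L$ (so $q\ne p$ and $q\notin S$), the conjugacy class of the Frobenius $\mathrm{Frob}_q$ in $\Gal(L/\Q)$ records exactly (i) the residue $q\bmod p^2$, (ii) for each $i$, the class of $q$ in $(\Z/q_i)^*/((\Z/q_i)^*)^p$, and (iii) for each $i$, whether $q_i$ is a $p$-th power modulo $q$. These three data correspond precisely to what we wish to prescribe: that $q\equiv 1\bmod p$ but $q\not\equiv 1\bmod p^2$, together with the vanishing or non-vanishing of each $\ell_{q,q_i}$ and each $\ell_{q_i,q}$.

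The crucial algebraic input is $[L:L_1]=p^d$, which by Kummer theory amounts to $q_1,\ldots,q_d$ being $\F_p$-linearly independent in $L_1^*/(L_1^*)^p$. Suppose $\prod_i q_i^{a_i}=\beta^p$ for some $\beta\in L_1^*$ with the $a_i$ not all divisible by $p$. Since $\prod_i q_i^{a_i}\in\Q^*$, the field $\Q(\beta)$ sits inside the abelian extension $L_1/\Q$ and is therefore itself abelian over $\Q$. But for odd $p$ and $\alpha\in\Q^*\setminus(\Q^*)^p$, the field $\Q(\alpha^{1/p})$ has degree $p$ over $\Q$ and cannot contain $\zeta_p$ (since $\gcd(p,p-1)=1$), hence is not even Galois. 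This contradiction forces $\prod_i q_i^{a_i}\in(\Q^*)^p$, and unique factorization in $\Z$ then gives $a_i\equiv 0\bmod p$ for all $i$.

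With the degree in hand, $\Gal(L/\Q)$ is an extension of the abelian group $\Gal(L_1/\Q)=(\Z/p^2)^*\times\prod_i(\Z/q_i)^*$ by $\Gal(L/L_1)\cong(\Z/p)^d$, with outer action through the mod-$p$ cyclotomic character. On the locus where this character is trivial -- equivalently, where $q\equiv 1\bmod p$ -- the extension becomes central, so the action of $\mathrm{Frob}_q$ on each $q_i^{1/p}$ may be prescribed independently of its image in $\Gal(L_1/\Q)$. We therefore choose an element $\sigma_0\in\Gal(L_1/\Q)$ that is non-trivial mod $p^2$ (giving $q\not\equiv 1\bmod p^2$) with any prescribed image in each $(\Z/q_i)^*/((\Z/q_i)^*)^p$ (giving the desired vanishing pattern of the $\ell_{q,q_i}$), and then lift it to an element of $\Gal(L/\Q)$ with any prescribed action on each $q_i^{1/p}$ (giving that of the $\ell_{q_i,q}$). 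Chebotarev yields infinitely many primes $q$ whose Frobenius realises this conjugacy class.

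The principal obstacle is the Kummer-independence step; once it is in place the rest is routine bookkeeping in the semidirect product $\Gal(L/\Q)$. The key observation unlocking the argument is the elementary fact that pure radical extensions $\Q(\alpha^{1/p})$ with $p$ odd and $\alpha\notin(\Q^*)^p$ are never Galois over $\Q$, and so cannot be embedded in the abelian cyclotomic field $L_1$.
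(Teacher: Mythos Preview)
Your argument is correct. The paper does not prove this theorem itself but cites \cite{La}, Proposition~6.1; the proof there is the same Chebotarev argument applied to the compositum of $\Q(\zeta_{p^2q_1\cdots q_d})$ with the Kummer extensions $\Q(\zeta_p,q_i^{1/p})$, so your approach coincides with the cited one.
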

\medskip

\begin{corollary}~\label{mex}
The set $S$ can always extended to a set $S'$ with $G_{S'}$ mild.
\end{corollary}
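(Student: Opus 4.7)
The plan is to iterate Theorem~\ref{cs1} so as to enlarge $S$ to a finite set $S'$ that satisfies conditions (a), (b), (c) of a circular set of primes, and then invoke Theorem~\ref{gsm}. Write $S = \{q_1,\ldots,q_d\}$. I would adjoin $d$ new primes $q'_1,\ldots,q'_d$ one at a time via Theorem~\ref{cs1}, prescribing when $q'_k$ is added that its only non-zero linking numbers with the already-constructed set are $\ell(q'_k,q_{k-1})$, $\ell(q_{k-1},q'_k)$, $\ell(q'_k,q_k)$, $\ell(q_k,q'_k)$ (indices taken cyclically mod $d$). In particular, after all $q'_j$ are added one has $\ell(q'_i,q'_j)=0$ for every pair $i\ne j$.

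Next, order the enlarged set $S' = \{q'_1,q_1,q'_2,q_2,\ldots,q'_d,q_d\}$ cyclically so that the new primes $q'_k$ sit at the odd positions and the original primes $q_k$ at the even positions. The size of $S'$ is $2d$, which is even (as required if (a) and (b) are to be simultaneously satisfiable). Each consecutive pair in the cyclic ordering is of the form $(q'_k,q_k)$ or $(q_k,q'_{k+1})$, both of which have non-zero linking numbers by construction; this is condition (a). The odd positions are occupied by the $q'_k$, whose pairwise linking numbers vanish by construction; this is condition (b).

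The main obstacle is the non-degeneracy condition~(c): the clockwise product of linking numbers around the cycle must differ in $\F_p$ from the counterclockwise product. Both products range over the same set of adjacent unordered pairs, but with the two arguments swapped in every factor, so they are generically unequal elements of $\F_p^*$. If for the particular primes initially produced they happen to coincide, the Chebotarev density argument underlying Theorem~\ref{cs1} produces infinitely many alternative primes realizing the same prescribed non-zero linking pattern, and one replaces a single $q'_k$ by such an alternative, thereby perturbing the values of the relevant linking numbers and breaking the coincidence. The equality of the two products cuts out a proper closed condition, so a suitable choice exists after finitely many attempts. Once (c) is secured, $S'$ is a circular set and Theorem~\ref{gsm} yields that $G_{S'}$ is mild, proving the corollary.
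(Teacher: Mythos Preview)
Your approach is essentially the same as the paper's: the paper (in the proof of Theorem~\ref{Lmex}, Section~6) likewise doubles $S$ by inserting $d$ new primes at the odd positions via Theorem~\ref{cs1}, prescribing their linking numbers so that only the cyclically adjacent ones are non-zero, and then invokes Theorem~\ref{gsm}. Your treatment of condition~(c) is in fact more explicit than the paper's, which simply asserts that $S'$ is circular; your perturbation argument is correct in spirit, and becomes rigorous once one notes that the Chebotarev argument behind Theorem~\ref{cs1} actually lets one prescribe the \emph{values} of the non-zero linking numbers in $\F_p^*$, not merely which ones are non-zero.
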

See Labute~(\cite{La}, Theorem~1.1) for the proof of Theorem~\ref{gsm} and (\cite{La}, Proposition 6.1) for the proof of Theorem~\ref{cs1}. The proof of Proposition~6.1 in \cite{La} yields the sharper form stated here.
\medskip

\begin{theorem}~\label{Lmex}
There exists a finite set $S'\supseteqq S$ consisting of primes $q\equiv1$ mod $p$, $q\not\equiv 1$ mod $p^2$ such that $G_{S'}$ is mild and, if $n< p$, the Lie algebra $\mathfrak l_{S'}$ has Property(FM(n)) if $\mathfrak l_S$ does.
\end{theorem}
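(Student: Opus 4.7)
The plan is to build $S'$ by iteratively adjoining primes $q_{d+1},\ldots,q_{d+k}$ to $S$ via the sharper form of Theorem~\ref{cs1}. When $q_{d+j}$ is adjoined to the current set $T_{j-1}=S\cup\{q_{d+1},\ldots,q_{d+j-1}\}$, I prescribe the asymmetric condition $\ell_{q_i,q_{d+j}}=0$ for every $q_i\in T_{j-1}$, while the reverse linking numbers $\ell_{q_{d+j},q_i}$ remain free and are used to arrange mildness of $G_{S'}$.

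Preservation of Property $FM(n)$ is then straightforward. Since $\ell_{i,d+r}=0$ for all $i\le d$ and $r\ge 1$, the relator $\sigma_i$ in $\mathfrak l_{S'}$ coincides with the original $\sigma_i$ of $\mathfrak l_S$ for each $i\le d$, so any $\phi:\mathfrak l_{S'}\to gl_n(\F_p)$ restricts, via the subalgebra generated by $\xi_1,\ldots,\xi_d$, to a representation of $\mathfrak l_S$, which is trivial by hypothesis. Thus $\phi(\xi_i)=0$ for $i\le d$. For the new generators I induct on $j$: because $\ell_{d+j,d+r}=0$ for $r>j$ (these were set to zero at the step where $q_{d+r}$ was adjoined), the relator $\sigma_{d+j}$ reduces to $c_{d+j}\xi_{d+j}+\sum_{i\le d}\ell_{d+j,i}[\xi_{d+j},\xi_i]+\sum_{r<j}\ell_{d+j,d+r}[\xi_{d+j},\xi_{d+r}]$, and applying $\phi$ with $\phi(\xi_i)=0$ ($i\le d$) and the inductive hypothesis $\phi(\xi_{d+r})=0$ ($r<j$) annihilates every bracket term, leaving $c_{d+j}\phi(\xi_{d+j})=0$. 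Since $c_{d+j}\not\equiv 0\bmod p$, $\phi(\xi_{d+j})=0$, so $\phi$ is trivial.

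The main obstacle is ensuring $G_{S'}$ is mild under this asymmetric constraint. The circularity criterion of Theorem~\ref{gsm} cannot be applied directly: any cyclic ordering of $S'$ containing both old and new primes would force some nonzero ``old-to-new'' linkage $\ell_{q_i,q_{d+r}}$, contradicting our prescription. The resolution I anticipate is to exploit the linear-triangular shape of the new initial forms $\rho_{d+j}=c_{d+j}\pi\xi_{d+j}+\sum_{i<d+j}\ell_{d+j,i}[\xi_{d+j},\xi_i]$ in the graded free Lie algebra $L$ over $\F_p[\pi]$: each $\rho_{d+j}$ contains $\xi_{d+j}$ only linearly with the invertible leading coefficient $c_{d+j}\pi$, so by an Anick-type argument, adjoining such triangular relators to a strongly free system preserves strong freeness. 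Combined with a refinement of the proof of Corollary~\ref{mex} — in which the mildness-producing primes are themselves chosen to be ``inert'' with respect to $S$ via the sharper Theorem~\ref{cs1} — this should yield the desired $S'$. The delicate bookkeeping required to ensure that the relators $\sigma_i$ for $i\le d$ remain unchanged throughout is the main technical point.
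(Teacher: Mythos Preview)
Your FM($n$) reduction is correct and in fact cleaner than what the paper does: by forcing every ``old-to-new'' linking number $\ell_{i,d+r}$ to vanish, the first $d$ relators of $\mathfrak l_{S'}$ literally coincide with those of $\mathfrak l_S$, so $\phi(\xi_1)=\cdots=\phi(\xi_d)=0$ by hypothesis, and then your triangular induction kills the remaining generators. Notice this part of your argument uses neither Lemma~\ref{BB} nor the hypothesis $n<p$.

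The gap is mildness. You correctly observe that your asymmetric prescription is incompatible with the circular-set criterion: any cyclic ordering of $S'$ must place some old prime immediately before some new prime, forcing a nonzero old-to-new link. Your proposed remedy---an ``Anick-type'' argument exploiting the triangular shape of the new $\rho_{d+j}$---is only a gesture. To run such an elimination you would need the original $\rho_1,\ldots,\rho_d$ to already be strongly free, but that is exactly what fails in general and is the whole reason for enlarging $S$. As written, the proposal does not establish that $G_{S'}$ is mild.

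The paper avoids this trap by \emph{not} insisting on your asymmetry. It interleaves: set $S'=\{q'_1,\ldots,q'_{2d}\}$ with $q'_{2i}=q_i$, and choose each new prime $q'_i$ ($i$ odd) so that the \emph{only} nonzero linking numbers involving it are $\ell'_{i,i+1}$ and $\ell'_{i-1,i}$. Then $S'$ is circular by inspection, so $G_{S'}$ is mild via Theorem~\ref{gsm}. The price is that each old relator $\sigma'_{2k}$ acquires one extra term $\ell'_{2k,2k+1}[\xi'_{2k},\xi'_{2k+1}]$, so the paper cannot immediately invoke FM($n$) for $\mathfrak l_S$. Instead it first applies Lemma~\ref{BB} (here the hypothesis $n<p$ enters) to the even-index relations to get each $A_{2k}$ nilpotent; then the odd-index relation $a_iA_i+[A_i,A_{i+1}]=0$ with $\ad(A_{i+1})$ nilpotent forces $A_i=0$ for $i$ odd. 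Only now do the even relators collapse to those of $\mathfrak l_S$, and FM($n$) for $\mathfrak l_S$ finishes the job. In short: the paper trades a trivially clean FM($n$) step for a trivially clean mildness step, while you attempted the opposite trade and left the hard direction open.
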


\section{Proof of Theorem~\ref{main}}
Let $G$ be a pro-$p$-group with $G/[G,G]\cong(\Z/p\Z)^d$ and let $\rho : G\rightarrow {\rm GL}_n^{(1)}(\Z_p)$ be a continuous homomorphism.  Let
$$
{\rm GL}_n^{(k)}(\Z_p)=\{X\in{\rm GL}_n(\Z_p)\mid X\equiv1\  {\rm mod}\  p^k\}.
$$
\begin{lemma}
Let $X=1+p^iA\in{\rm GL}_n^{(i)}(\Z_p)$, $Y=1+p^jB\in{\rm GL}_n^{(j)}(\Z_p)$ then
$$
[X,Y]=1+p^{i+j}[A,B]\ {\rm mod}\ p^{i+j+1},\ X^p=1+p^{i+1}A\ {\rm mod}\ p^{i+2},
where\ [A,B]=AB-BA.
$$
\end{lemma}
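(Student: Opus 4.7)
The plan is to verify both congruences by direct computation with the binomial expansion, treating the matrix entries as $p$-adic integers and carefully tracking $p$-adic valuations. The assumption is that $i,j\ge1$, so $X,Y\in{\rm GL}_n^{(1)}(\Z_p)$ and in particular their inverses exist as convergent geometric series
$$
X^{-1}=\sum_{k\ge0}(-p^iA)^k,\qquad Y^{-1}=\sum_{k\ge0}(-p^jB)^k,
$$
each of which is $\equiv 1\pmod{p}$.

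For the commutator statement, I would first compute
$$
XY=1+p^iA+p^jB+p^{i+j}AB,\qquad YX=1+p^jB+p^iA+p^{i+j}BA,
$$
so that $XY-YX=p^{i+j}[A,B]$. Writing $[X,Y]=(YX)^{-1}(XY)=1+(YX)^{-1}(XY-YX)$, and using that $(YX)^{-1}\equiv 1\pmod{p}$ (since $YX\in{\rm GL}_n^{(1)}$), I get
$$
[X,Y]=1+p^{i+j}[A,B]+p^{i+j+1}(\cdot),
$$
which is the first claim.

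For the power statement, the binomial theorem yields
$$
X^p=(1+p^iA)^p=1+p\cdot p^iA+\sum_{k=2}^{p-1}\binom{p}{k}p^{ki}A^k+p^{pi}A^p.
$$
The leading correction term is $p^{i+1}A$. For $1\le k\le p-1$ we have $p\mid\binom{p}{k}$, so the $k$-th summand has $p$-adic valuation at least $1+ki$; since $i\ge1$, this is $\ge i+2$ for every $k\ge2$. The tail term $p^{pi}A^p$ has valuation $pi\ge i+2$ because $(p-1)i\ge2$ when $p\ge3$ and $i\ge1$. Hence every term beyond the leading $1+p^{i+1}A$ lies in $p^{i+2}M_n(\Z_p)$, giving $X^p\equiv 1+p^{i+1}A\pmod{p^{i+2}}$.

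The only subtlety is bookkeeping: one must check that the estimate $(YX)^{-1}\equiv 1\pmod p$ is enough to absorb the error in the commutator computation into the next power of $p$, and that the binomial coefficients $\binom{p}{k}$ contribute an extra factor of $p$ precisely in the range $1\le k\le p-1$. Both are standard, so no substantial obstacle is expected; the lemma is essentially a bookkeeping fact about the filtration on ${\rm GL}_n(\Z_p)$ and sets up the identification of $\gr({\rm GL}_n^{(1)}(\Z_p))$ with $gl_n(\F_p)[\pi]$ that will be needed for the proof of Theorem~\ref{main}.
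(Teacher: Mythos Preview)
Your argument is correct. The paper states this lemma without proof, treating it as an elementary computation; your direct verification via $[X,Y]=(YX)^{-1}(XY)=1+(YX)^{-1}(XY-YX)$ together with the binomial expansion for $X^p$ is exactly the standard way to check it, and your valuation bookkeeping (in particular $(p-1)i\ge2$ for $p\ge3$, $i\ge1$) is accurate.
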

\medskip

\begin{lemma}~\label{H}
If $\rho(G)\ne1$ then $\rho(G)\not\subseteq {\rm GL}_n^{(2)}(\Z_p)$.
\end{lemma}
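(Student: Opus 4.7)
I will prove the contrapositive in the stronger inductive form: for every $k\ge 2$, if $\rho(G)\subseteq{\rm GL}_n^{(k)}(\Z_p)$ then $\rho(G)\subseteq{\rm GL}_n^{(k+1)}(\Z_p)$. Iterating on $k$ and using $\bigcap_{k\ge 1}{\rm GL}_n^{(k)}(\Z_p)=\{1\}$ together with the continuity of $\rho$ then forces $\rho(G)=1$, which is the lemma.

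To execute the inductive step, fix $g\in G$ and write $\rho(g)=1+p^k A$ with $A\in M_n(\Z_p)$. Since $G/[G,G]\cong(\Z/p\Z)^d$ has exponent $p$, we have $g^p\in\overline{[G,G]}$, and therefore
$$
\rho(g)^p=\rho(g^p)\in\rho(\overline{[G,G]})\subseteq\overline{[\rho(G),\rho(G)]}.
$$
By the preceding lemma, any commutator of two elements of ${\rm GL}_n^{(k)}(\Z_p)$ lies in ${\rm GL}_n^{(2k)}(\Z_p)$; since ${\rm GL}_n^{(2k)}(\Z_p)$ is a closed normal subgroup of ${\rm GL}_n(\Z_p)$, it contains the whole closed commutator subgroup $\overline{[\rho(G),\rho(G)]}$. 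Because $k\ge 2$ we have $2k\ge k+2$, and hence $\rho(g)^p\in{\rm GL}_n^{(k+2)}(\Z_p)$.

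On the other hand, the preceding lemma also gives the congruence $\rho(g)^p\equiv 1+p^{k+1}A\ {\rm mod}\ p^{k+2}$. Combining the two pieces of information yields $p^{k+1}A\equiv 0\ {\rm mod}\ p^{k+2}$, so $A\equiv 0\ {\rm mod}\ p$, and therefore $\rho(g)\in{\rm GL}_n^{(k+1)}(\Z_p)$. This completes the induction and hence the proof. The argument is essentially routine; the only real obstacle is recognising why the induction must start at $k=2$ rather than $k=1$: the inequality $2k\ge k+2$ that lets the commutator contribution absorb the $p$-th-power contribution fails precisely at $k=1$, which is exactly the obstruction encoded in the statement of the lemma.
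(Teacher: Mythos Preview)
Your proof is correct and uses exactly the same ingredients as the paper: the commutator congruence placing $[\rho(G),\rho(G)]$ inside ${\rm GL}_n^{(2k)}(\Z_p)$, the $p$-th power congruence $\rho(g)^p\equiv 1+p^{k+1}A\bmod p^{k+2}$, and the fact that $G/[G,G]$ has exponent $p$. The only cosmetic difference is packaging: the paper argues directly by choosing the largest $k$ with $\rho(G)\subseteq{\rm GL}_n^{(k)}(\Z_p)$ and deriving $k+1\ge 2k$, hence $k=1$, whereas you phrase the same inequality $2k\ge k+2$ as an inductive step pushing $k$ to $k+1$ and then intersect.
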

\begin{proof}
Let $H=\rho(G)$ and let $k\ge1$ be largest with $H\subseteq {\rm GL}_n^{(k)}(\Z_p)$. Let $h_1,\ldots h_d$ be a generating set for $H$ and let $h_i=I+p^kN_i$. Then
$[h_i,h_j]\in{\rm GL}_n^{(2k)}(\Z_p)$
which implies that $[H,H]\subseteq{\rm GL}_n^{(2k)}(\Z_p)$. By assumption, there exists $i$ such that $N_i\not\equiv0$ {\rm mod} $p$. But
$$
h_i^p=(1+p^kN_i)^p\equiv1+p^{k+1}N_i\ \ {\rm mod}\ p^{k+2}.
$$
Since $N_i\not\equiv0$ modulo $p$ we have $h_i^p\in [H,H]$ only if $k+1\ge 2k$ which implies that $k=1$.
\end{proof}

Let $G=G_S$. Then $G_S$ has the presentation $F(x_1,\ldots,x_d)/(r_1,\ldots,r_d)$
where
$$
r_i=x_i^{pc_i}\prod_{j\ne i}[x_i,x_j]^{\ell_{ij}}\ \ {\rm mod}\ F_3.
$$
Let $\rho(x_i)=1+pA_i$. Then modulo $p^3$ we have
$$
1=\rho(r_i)=1+p^2(c_iA_i+\sum_{j\ne i}\ell_{ij}[A_i,A_j]).
$$
Hence, if ${\overline A}_i$ is the image of $A_i$ in $gl_n(\F_p)$, we have
$$
c_i{ \overline A}_i+\sum_{j\ne i}\ell_{ij}[{\overline A}_i,{\overline A}_j]=0.
$$
Thus $\ell(\rho)(\xi_i)={\overline A}_i$ defines a Lie algebra homomorphism $\ell(\rho):\mathfrak l_S\rightarrow sl_n(\F_p)$. If $\rho=1$ then $A_i=0$ for all $i$ which implies $\ell(\rho)=0$. Conversely, if $\rho\ne1$ then by Lemma~\ref{H} we have ${\overline A}_i\ne 0$ for some $i$ which implies $\ell(\rho)\ne0$.

\section{Proof of Theorem~\ref{FM2}}
 Without loss of generality, we can assume that $G_S$ is mild. Let $H=\rho(G_S)$ and assume that $\overline{H}=\overline\rho(G_S)\ne1$. Note that $H$ is a subgroup of ${\rm SL}_2(\Z_p)$ since $G_S/[G_S,G_S]$ is finite. After a change of basis, we can assume that the matrices $\begin{bmatrix}a&b\\ c&d\end{bmatrix}\in H$ satisfy $p^3|c$  and that $\overline H$ is generated by the image of
$$
C=\begin{bmatrix}1&1\\ 0&1\end{bmatrix}.
$$
Let $h_1,\ldots,h_d$ be a generating set for $H$ with $h_1,\ldots,h_{d-1}\in {\rm SL}_n^{(1)}(\Z_p)$ and $h_d\equiv C$ mod $p$. We have
$$
h_i-1=pA_i=p\begin{bmatrix}a_i&b_i\\ c_i&d_i\end{bmatrix}\ {\rm for}\ i<d\ {\rm and}\  h_d-1=\begin{bmatrix}p\,a_d&1+p\,e\\ p\,c_d&p\,f\end{bmatrix}.
$$
We also have $d>1$ since otherwise $H$ is infinite cyclic which is impossible since $H/[H,H]$ is finite.
\begin{lemma}\label{s1}
Let $X,Y\in{\rm GL}_2(\Z_p)$ with $X=1+pA=1+p\begin{bmatrix}a&b\\ c&d\end{bmatrix}$ and with $Y\equiv C$ mod $p$. Then
$$
[X,Y] \equiv 1+p\begin{bmatrix}-c&a-d-c\\ 0&c\end{bmatrix}\ \ {\rm mod}\ p^2.
$$
\end{lemma}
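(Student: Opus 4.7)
The plan is to compute $[X,Y]=X^{-1}Y^{-1}XY$ modulo $p^2$ by exploiting the identity
$$
[X,Y]=(YX)^{-1}(XY),
$$
which recasts the problem in terms of the difference $XY-YX$. Writing $Y=C+pB$ for some $B\in M_2(\Z_p)$ (we will not need to know $B$), I would expand
$$
XY=(1+pA)(C+pB)=C+p(AC+B)+O(p^2),\qquad YX=C+p(CA+B)+O(p^2),
$$
so the $B$-contribution cancels and $XY-YX\equiv p[A,C]\pmod{p^2}$, where $[A,C]=AC-CA$.

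Next I would observe that $YX\equiv C\pmod p$, hence $(YX)^{-1}\equiv C^{-1}\pmod p$ with $C^{-1}=\begin{bmatrix}1&-1\\ 0&1\end{bmatrix}$. Multiplying,
$$
[X,Y]=(YX)^{-1}(YX+p[A,C]+O(p^2))\equiv 1+p\,C^{-1}[A,C]\pmod{p^2}.
$$
So the statement reduces to the explicit $2\times2$ calculation of $C^{-1}[A,C]$.

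Finally I would carry out that calculation: from
$$
AC=\begin{bmatrix}a&a+b\\ c&c+d\end{bmatrix},\qquad CA=\begin{bmatrix}a+c&b+d\\ c&d\end{bmatrix},
$$
one gets $[A,C]=\begin{bmatrix}-c&a-d\\ 0&c\end{bmatrix}$, and then
$$
C^{-1}[A,C]=\begin{bmatrix}1&-1\\ 0&1\end{bmatrix}\begin{bmatrix}-c&a-d\\ 0&c\end{bmatrix}=\begin{bmatrix}-c&a-d-c\\ 0&c\end{bmatrix},
$$
which is precisely the claimed matrix. There is no real obstacle here; the only thing to be careful about is the order of multiplication when inverting $YX$, which is why I prefer the form $[X,Y]=(YX)^{-1}(XY)$ over $X^{-1}Y^{-1}XY$—this avoids having to expand $X^{-1}$ and $Y^{-1}$ separately and keeps the $O(p^2)$ bookkeeping trivial.
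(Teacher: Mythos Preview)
Your argument is correct. The paper proceeds differently: it sets $N=Y-1$, expands $X^{-1}Y^{-1}XY$ directly using $X^{-1}\equiv 1-pA$ and the truncated geometric series $(1+N)^{-1}\equiv 1-N+N^2-N^3$ (valid mod $p^2$ because $N\equiv\begin{bmatrix}0&1\\0&0\end{bmatrix}$ mod $p$ is nilpotent, so $N^4\equiv 0$ mod $p^2$), and after multiplying everything out obtains $1+p[A,N]-pNAN$. Your route via $[X,Y]=(YX)^{-1}(XY)=1+(YX)^{-1}(XY-YX)$ is cleaner: the unknown perturbation $B$ in $Y=C+pB$ cancels immediately in $XY-YX$, and you never have to expand two inverses or keep track of the higher powers of $N$. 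The two answers agree because $[A,N]=[A,C]$ and, since $(C-1)^2=0$, one has $C^{-1}=1-N$ exactly, so $C^{-1}[A,C]=[A,N]-N[A,N]=[A,N]-NAN$. Either way the proof finishes with the same explicit $2\times2$ computation.
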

\begin{proof}
Let $N=Y-1$. Then, working mod $p^2$, we have
\begin{align*}
[X,Y]&\equiv (1+pA)^{-1}(1+N)^{-1}(1+pA)(1+N)\\
&\equiv(1-pA)(1-N+N^2-N^3)(1+pA)(1+N)\\
&\equiv(1-pA-N+pAN+N^2-N^3)(1+pA+N+pAN)\\
&\equiv1+p[A,N]-pNAN\\
&\equiv1+p[A,N]-pN[A,N]\\
&=1+p\begin{bmatrix}-c&a-d-c\\0&c\end{bmatrix}
\end{align*}
\end{proof}

\begin{lemma}\label{s5}
We have $h_d^p\equiv1+p\begin{bmatrix}0&1\\ 0&0\end{bmatrix}$ mod $p^2$.
\end{lemma}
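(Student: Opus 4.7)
The plan is to expand $h_d^p = (C + pM)^p$ binomially modulo $p^2$, where $C = \begin{bmatrix}1&1\\0&1\end{bmatrix} = I + N$ with $N = \begin{bmatrix}0&1\\0&0\end{bmatrix}$ satisfying $N^2 = 0$, and $M = \begin{bmatrix}a_d & e \\ c_d & f\end{bmatrix}$. The nilpotency of $N$ already gives $C^j = I + jN$ for every $j \ge 0$, hence $C^p = I + pN$, which is precisely the right-hand side of the lemma. It remains only to show that the noncommutative correction coming from the $pM$ summand vanishes modulo $p^2$.

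Working mod $p^2$, only terms of $(C + pM)^p$ of $M$-degree at most one survive, so
$$
h_d^p \equiv C^p + p\sum_{k=0}^{p-1} C^{p-1-k}\, M\, C^k \pmod{p^2}.
$$
Substituting $C^j = I + jN$, the inner sum $\sum_{k=0}^{p-1} (I + (p-1-k)N)\, M\, (I + kN)$ decomposes into four pieces with matrix factors $M$, $NM$, $MN$, $NMN$ and respective scalar coefficients $p$, $p(p-1)/2$, $p(p-1)/2$, and $\sigma := \sum_{k=0}^{p-1} k(p-1-k)$. Since $p$ is odd, $p(p-1)/2$ is divisible by $p$, so the $M$, $NM$, and $MN$ contributions each vanish mod $p^2$ once multiplied by the outer factor of $p$.

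For the surviving term $p\sigma\, NMN$, a direct multiplication gives $NMN = \begin{bmatrix}0 & c_d \\ 0 & 0\end{bmatrix}$. By the standing hypothesis that every matrix in $H$ has $(2,1)$-entry divisible by $p^3$, applied to $h_d$ itself, one has $p^2 \mid c_d$, so $NMN \equiv 0 \pmod{p^2}$ and the whole correction in fact lies in $p^3 M_2(\Z_p)$. Combining everything yields $h_d^p \equiv I + pN \pmod{p^2}$, as required. The delicate point — and the main obstacle — is that $\sigma \equiv -\sum k^2 \pmod p$ is not divisible by $p$ when $p = 3$, so one cannot kill the $NMN$ correction by its combinatorial coefficient alone; this is precisely where the change-of-basis normalization $p^3 \mid c$ does the essential work, by making the matrix $NMN$ itself negligible mod $p^2$.
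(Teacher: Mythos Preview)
Your proof is correct, but it is organized differently from the paper's. The paper writes $h_d=1+N$ with $N=\begin{bmatrix}p\,a_d&1+p\,e\\ p\,c_d& p\,f\end{bmatrix}$; since $1$ commutes with $N$, the ordinary binomial theorem applies and one only needs the estimates $pN\equiv p\begin{bmatrix}0&1\\0&0\end{bmatrix}$, $N^2\equiv p\begin{bmatrix}0&a_d+f\\0&0\end{bmatrix}$, $N^3\equiv 0$ (all mod~$p^2$) to conclude $(1+N)^p\equiv 1+pN$ mod~$p^2$. Your decomposition $h_d=C+pM$ forces a genuinely noncommutative expansion with cross terms $M$, $NM$, $MN$, $NMN$, which you handle correctly. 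What your route buys is transparency about \emph{where} the normalization $p^3\mid c$ is actually used: you isolate it cleanly in the $NMN$ term and note that for $p=3$ the combinatorial coefficient $\sigma$ alone does not kill that term. The paper's argument uses the same fact implicitly (its claims on $N^2$ and $N^3$ mod~$p^2$ already require $p\mid c_d$), but the computation there is shorter because no noncommutativity needs to be tracked. One small imprecision: your phrase ``the whole correction in fact lies in $p^3 M_2(\Z_p)$'' is literally true only of the $p\sigma NMN$ piece, not of the full sum $p\sum C^{p-1-k}MC^k$ (the $p\cdot pM$ contribution is only in $p^2M_2(\Z_p)$); this does not affect the argument, which only needs the correction to vanish mod~$p^2$.
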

\begin{proof}
We have $h_d=1+N$ with $N=\begin{bmatrix}p\,a_d&1+p\,e\\ p\,c_d& p\,f\end{bmatrix}$ so that mod $p^2$
$$
pN\equiv p\begin{bmatrix}0&1\\0&0\end{bmatrix},\ \ N^2\equiv p\begin{bmatrix}0&a_d+f\\0&0\end{bmatrix},\ \ N^3\equiv 0.
$$
Hence we have $h_d^p=(1+N)^p\equiv1+pN$\ {\rm}\ mod $p^2$.
\end{proof}

 Let $M=\Z_pe_1+\Z_pe_2$ and let $B$ be the image of $A=\Z_p[[G_S]]$ in End($M$). Let $J=(p,h_1-1,\ldots,h_d-1)$ be the augmentation ideal  of $B$. Then $JM=\Z_pe_1+\Z_pp\,e_2$ and by induction we have
$$
J^kM=\Z_p\,p^{k-1}e_1+\Z_p\,p^ke_2
$$
for $k\ge1$. It follows that $\gr(M)=\sum_{k\ge0}J^kM/J^{k+1}M$ is a free $\F_p[\pi]$-module with basis $\overline e_1\in \gr_2(M),\overline e_2\in\gr_1(M)$. Using the fact that
$$
(h_i-1)e_1=p\,a_ie_1+p\,c_ie_2
$$
with $p^2|c_i$ we see that $\gr(h_i-1)\overline e_1=a_i\pi\overline e_1$. Since the elements $\gr(h_i-1),\ (i\le d)$ generate $\gr(B)=\sum_{k\ge0}J^k/J^{k+1}$ the submodule $W=\F_p[\pi]\overline e_1$ is invariant under $\gr(B)$ and we obtain a homomorphism
$$
\phi_1:\gr(B)\rightarrow {\rm End}(W)=gl_1(\F_p[\pi])
$$
with $\phi_1(\gr(h_i-1))=\pi a_i$. We want to show that $a_i$ is non-trivial mod $p$ for some $i<d$.

\begin{lemma}\label{s2}
If $X=1+pA\in {\rm SL}_n^{(1)}(\Z_p)$ then $\tr(A)\equiv0$ mod $p$.
\end{lemma}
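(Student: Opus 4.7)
The plan is to read off the congruence directly from the condition $\det(X)=1$ by expanding the determinant as a polynomial in $p$.

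First I would note that for any matrix $A\in M_n(\Z_p)$, the Leibniz expansion of $\det(1+pA)$ gives
\[
\det(1+pA)=\sum_{k=0}^{n}p^{k}e_{k}(A),
\]
where $e_{k}(A)$ is the $k$-th coefficient of the characteristic polynomial of $A$ (so $e_0=1$, $e_1=\tr(A)$, and each $e_k\in\Z_p$). Imposing $\det(X)=1$ yields
\[
0=p\,\tr(A)+p^{2}\bigl(e_2(A)+p\,e_3(A)+\cdots+p^{n-2}e_n(A)\bigr).
\]
Dividing by $p$ shows $\tr(A)\equiv0\pmod p$, which is exactly the claim.

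The step is essentially a one-line calculation, so there is no real obstacle; the only thing to be careful about is to work with $p$-adic integer coefficients, which is automatic because the entries of $A$ lie in $\Z_p$ and the elementary symmetric functions have integer coefficients.
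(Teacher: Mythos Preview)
Your argument is correct and is essentially the same as the paper's: both expand $\det(1+pA)$ modulo $p^2$ to get $1+p\,\tr(A)$ and conclude from $\det(X)=1$. The paper simply states $\det(1+pN)\equiv 1+p\,\tr(N)\pmod{p^2}$ without writing out the full characteristic-polynomial expansion.
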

\begin{proof}
If $X=1+pN\in {\rm SL}_n^{(1)}(\Z_p)$, we have $1=\det(1+pN)\equiv1+p\,\tr(N)$ mod $p^2$ which implies that $\tr(N)\equiv0$ mod $p$.
\end{proof}
\begin{lemma}\label{s3}
If $1\le i<d$ and $\pi a_i=\phi_1(\gr(h_i-1))=0$ then  $[h_i,h_d]\in {\rm SL}_2^{(2)}(\Z_p)$.
\end{lemma}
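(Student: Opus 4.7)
The plan is to reduce to a direct calculation using Lemma~\ref{s1}. Applying that lemma with $X=h_i=1+pA_i$ for $i<d$, whose entries are $a_i,b_i,c_i,d_i$, and with $Y=h_d\equiv C\ {\rm mod}\ p$, we get
\[
[h_i,h_d]\equiv 1+p\begin{bmatrix}-c_i & a_i-d_i-c_i\\ 0 & c_i\end{bmatrix}\ {\rm mod}\ p^2.
\]
It therefore suffices to show that each entry of the bracketed matrix is divisible by $p$.

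The entries $\pm c_i$ vanish mod $p$ because the change of basis performed at the start of the section forced $p^3$ to divide the bottom-left entry of every matrix in $H$, so $p^2\mid c_i$. Only the $(1,2)$-entry $a_i-d_i-c_i$ requires the hypothesis. The condition $\pi a_i=0$ in $\F_p[\pi]$ forces $a_i\equiv 0\ {\rm mod}\ p$, since $\pi$ is not a zero-divisor in the polynomial ring $\F_p[\pi]$. Applying Lemma~\ref{s2} to $h_i\in {\rm SL}_2^{(1)}(\Z_p)$ gives $a_i+d_i=\tr(A_i)\equiv 0\ {\rm mod}\ p$, whence $d_i\equiv -a_i\equiv 0\ {\rm mod}\ p$. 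Combined with $c_i\equiv 0\ {\rm mod}\ p$, this makes $a_i-d_i-c_i\equiv 0\ {\rm mod}\ p$, establishing the lemma.

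There is no serious obstacle beyond unpacking the hypothesis: once one notices that $\pi a_i=0$ is just the vanishing of $a_i$ mod $p$, the conclusion drops out of Lemma~\ref{s1}, the trace identity of Lemma~\ref{s2}, and the divisibility $p^2\mid c_i$ built into the chosen basis.
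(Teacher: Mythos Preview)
Your proof is correct and follows the same approach as the paper's own argument: the paper's proof simply records that $a_i\equiv 0\pmod p$ by hypothesis, invokes Lemma~\ref{s2} to get $d_i\equiv 0\pmod p$, and then appeals to Lemma~\ref{s1}. You have spelled out the same steps in more detail, including the observation $p^2\mid c_i$ coming from the chosen basis, which the paper leaves implicit.
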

\begin{proof}
Since $a_i\equiv0$ mod $p$, Lemma~\ref{s2} implies that $d_i\equiv0$ mod $p$. The result then follows from Lemma~\ref{s1}.
\end{proof}

\begin{lemma}\label{s4}
If $\pi a_i=\phi_1(\gr(h_i-1))=0$ for $1\le i<d$ then $[H,H]\subseteq {\rm SL}_2^{(2)}(\Z_p)$.
\end{lemma}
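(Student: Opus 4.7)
The plan is to reduce the claim to checking that each basic commutator $[h_i,h_j]$ lies in ${\rm SL}_2^{(2)}(\Z_p)$, and then to pass from generators to the whole commutator subgroup via normality. Specifically, I would first observe that ${\rm SL}_2^{(2)}(\Z_p)$ is a principal congruence subgroup, hence normal in ${\rm SL}_2(\Z_p)$ and a fortiori in $H$, and closed in the $p$-adic topology. Combined with the standard commutator identities $[xy,z]=[x,z]^y[y,z]$ and $[x,yz]=[x,z][x,y]^z$, every element of the (closed) commutator subgroup $[H,H]$ is a limit of products of $H$-conjugates of the basic commutators $[h_i,h_j]$ for $1\le i<j\le d$. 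Hence it suffices to verify the inclusion on each such basic commutator.

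Next I would split the basic commutators into two types. If $1\le i<j<d$, both $h_i$ and $h_j$ lie in ${\rm SL}_2^{(1)}(\Z_p)$, so the first lemma of Section~3 (the commutator formula for congruence units, applied with exponents $i=j=1$) gives $[h_i,h_j]\equiv 1\ \text{mod}\ p^2$, i.e., $[h_i,h_j]\in{\rm SL}_2^{(2)}(\Z_p)$. If instead $j=d$ with $i<d$, then the standing hypothesis $\pi a_i=\phi_1(\gr(h_i-1))=0$ makes Lemma~\ref{s3} directly applicable, yielding $[h_i,h_d]\in{\rm SL}_2^{(2)}(\Z_p)$. The case $i=j$ is trivial.

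There is essentially no fresh obstacle at this stage: the genuine technical work was already absorbed by Lemma~\ref{s3}, where the explicit form of $[h_i,h_d]$ mod $p^2$ furnished by Lemma~\ref{s1} combined with the trace relation of Lemma~\ref{s2} was exactly what forced the surviving entries to vanish mod $p$ (using also $p^3\mid c$ built into the initial change of basis). Lemma~\ref{s4} therefore emerges as a clean formal consequence of Lemmas~\ref{s1}--\ref{s3} via the normal-closure argument on the chosen topological generating set $h_1,\ldots,h_d$.
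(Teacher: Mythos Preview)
Your proposal is correct and follows essentially the same approach as the paper's proof: both reduce to the basic commutators $[h_i,h_j]$, treat the case $i,j<d$ via the congruence commutator formula (the paper cites ``the proof of Lemma~\ref{H}'', you cite the first lemma of Section~3, which amounts to the same computation), and invoke Lemma~\ref{s3} for $[h_i,h_d]$, then close up under normal closure using that ${\rm SL}_2^{(2)}(\Z_p)$ is normal and closed in $H$. Your version is slightly more explicit about the normal-closure mechanism, but there is no substantive difference.
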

\begin{proof}
The pro-$p$-group $[H,H]$ is generated, as a normal subgroup of $H$, by the elements of the form $[h_i,h_j]$ and $[h_i,h_d]$ with $i,j <d$. Since the elements of the form $[h_i,h_j]$ with $i,j<d$ are congruent to $1$ mod $p^2$ by the proof of Lemma~\ref{H}, the result follows from Lemma~\ref{s3}.
\end{proof}

So if $\gr(h_i-1)$ acts trivially on $\gr(M)$ for $1\le i<d$ then $h_d^p$ is not in $[H,H]$ by Lemmas~\ref{s4} and~\ref{s5}, contradicting the fact that $H/[H,H]$ is elementary. So the homomorphism $\phi_1:\gr(B)\rightarrow gl_1(\F_p[\pi])$ is non-trivial. Composing $\phi_1$ with the canonical surjection $\gr(\Z_p[[G_S]])\rightarrow \gr(B)$, we obtain a non-trivial homomorphism
$$
\phi:\gr(\Z_p[[G_S]])\rightarrow gl_1(\F_p[\pi]).
$$
Composing the canonical map $\alpha:\gr(G_S)\rightarrow\gr(\Z_p[[G_S]])$ with $\phi$, we get a Lie algebra homomorphism
$$
\gr'(\rho) : \gr(G_S)\rightarrow gl_1(\F_p[\pi]).
$$
 Since $G_S$ is mild $\alpha$ is injective and $\gr(\Z_p[[G_S]])$ is the enveloping algebra of $\gr(G_S)$ which implies that $\gr'(\rho)\ne0$ since $\gr(G_S)$ generates $\gr(\Z_p[[G_S]])$.

Now $G_S$ has the presentation $F(x_1,\ldots,x_d)/(r_1,\ldots,r_d)$
where
$$
r_i=x_i^{pc_i}\prod_{j\ne i}[x_i,x_j]^{\ell_{ij}}\ \ {\rm mod}\ F_3.
$$
 Since $G_S$ is mild, we have $\gr(G_S)=<\xi_1,\ldots,\xi_n\mid \rho_1,\ldots\rho_d>$ where
$$
\rho_i=c_i\pi\xi_i+\sum_{j\ne i}\ell_{ij}[\xi_i,\xi_j].
$$
In this case, if $\gr'(\rho)(\xi_i)=\pi u_i$ then $\gr'(\rho)(\rho_i)=\pi^2c_iu_i=0$ and so $u_i=0$ for all $i$ which contradicts the fact that $\gr'(\rho)\ne0$.
\medskip

\section{Proof of Theorem~\ref{S3}}
Here $|S|=3$ and the relations for $\mathfrak l_S$ can be written in the form
\begin{align*}
\xi_1&=m_{12}[\xi_1,\xi_2]+m_{13}[\xi_1,\xi_3],\\
\xi_2&=m_{21}[\xi_2,\xi_1]+m_{23}[\xi_2,\xi_3],\\
\xi_3&=m_{31}[\xi_3,\xi_1]+m_{32}[\xi_3,\xi_2],
\end{align*}
where $m_{ij}=-\ell_{ij}/c_i$. Let $r:\mathfrak l_S\rightarrow gl_n(\F_p)$ be a Lie algebra homomorphism and let $A_i=r(\xi_i)$. Then
\begin{align*}
A_1&=m_{12}[A_1,A_2]+m_{13}[A_1,A_3],\\
A_2&=m_{21}[A_2,A_1]+m_{23}[A_2,A_3],\\
A_3&=m_{31}[A_3,A_1]+m_{32}[A_3,A_2],
\end{align*}
Since $r=0$ if $A_1,A_2,A_3$ are linearly dependent we may assume that $A_1,A_2,A_3$ are linearly independent. Note that each of the above relations can be written in the form $A_i=[A_i,B_i]$ for some $B_i\in gl_n(\F_p)$. Then, by the following Lemma which was pointed out to us by Nigel Boston, each matrix $A_i$ is nilpotent if $n<p$.

\begin{lemma}\label{BB}
Let $A,B$ be $n\times n$ matrices over $\F_p$ with $A=[A,B]$. Then $A$ is nilpotent if $n<p$.
\end{lemma}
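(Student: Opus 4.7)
The plan is to exploit the identity $A = [A,B]$ to compute commutators of powers of $A$ with $B$, then feed the resulting trace conditions into Newton's identities.

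First I would establish, by a short induction on $k$, the formula
\[
[A^k,B] = k A^k.
\]
Indeed, for $k=1$ this is the hypothesis, and in general one can use the Leibniz-style identity $[A^k,B] = \sum_{i=0}^{k-1} A^i[A,B]A^{k-1-i}$; since $[A,B] = A$ each summand collapses to $A^k$, giving $kA^k$. Taking the trace of both sides kills the left-hand side, so $k\,\tr(A^k) = 0$ in $\F_p$. Because $1 \le k \le n < p$, the scalar $k$ is invertible in $\F_p$, and we conclude
\[
\tr(A^k) = 0 \quad \text{for } k = 1,2,\ldots,n.
\]

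Next I would invoke Newton's identities relating the power sums $p_k = \tr(A^k)$ of the eigenvalues of $A$ (in an algebraic closure of $\F_p$) to the elementary symmetric functions $e_1,\ldots,e_n$, namely $k e_k = \sum_{i=1}^{k}(-1)^{i-1} e_{k-i} p_i$ for $1 \le k \le n$. Since each $k$ appearing here is at most $n < p$, all the required divisions are legal in $\F_p$, and an induction on $k$ using $p_1=\cdots=p_n=0$ forces $e_1 = \cdots = e_n = 0$. Hence the characteristic polynomial of $A$ is $T^n$, and Cayley--Hamilton gives $A^n = 0$, so $A$ is nilpotent.

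The only genuine subtlety is the positive-characteristic version of Newton's identities: both the commutator trick and Newton's recursion require inverting the integers $1, 2, \ldots, n$ in $\F_p$, and this is exactly the content of the hypothesis $n<p$. Everything else is formal linear algebra, so I expect no other obstacle; the constraint $n<p$ is seen to be essential, as otherwise one could only deduce vanishing of $\tr(A^k)$ for $k$ not divisible by $p$, which is insufficient to kill all elementary symmetric functions.
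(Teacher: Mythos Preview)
Your argument is correct, but it proceeds along a different line from the paper's. The paper obtains $\tr(A^m)=0$ for \emph{every} $m\ge 1$ in one stroke: writing $A^m=(AB-BA)A^{m-1}$ and using cyclicity of the trace gives $\tr(A^m)=\tr(A^mB)-\tr(BA^m)=0$, with no factor of $m$ appearing. It then passes to a finite extension $\F_q$ in which $A$ triangularizes and observes that $\tr(A^{q-1})$ equals, in $\F_p$, the number $k$ of nonzero eigenvalues; since $0\le k\le n<p$, the vanishing of this trace forces $k=0$. Your route instead proves $[A^k,B]=kA^k$, extracts $\tr(A^k)=0$ only for $1\le k\le n$, and then feeds these into Newton's identities; the hypothesis $n<p$ enters twice, once to cancel $k$ in $k\,\tr(A^k)=0$ and once to solve the Newton recursion for the $e_k$. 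The paper's finite-field trick is shorter and avoids Newton's identities entirely, while your argument is field-agnostic: it works verbatim over any field of characteristic $0$ or of characteristic $p>n$, not just $\F_p$.
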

\begin{proof}
Replacing $\F_p$ be a finite extension $\F_q$, we may assume that $A$ is upper triangular. Then the trace of $A^{q-1}$ is $k\cdot1$ with $0\le k<p$. But the trace of $A^n$ is zero for any $n\ge 1$ since $A=[A,B]$ implies that $\tr(A^n)=\tr(ABA^{n-1}-BA^n)=0$. It follows that $k=0$ and hence that the characteristic polynomial of $A$ is $X^n$.
\end{proof}
\noindent{\bf Remark.} This proof of the above Lemma is due to Julien Blondeau.
\medskip

If condition (a) holds we can, without loss of generality, assume that $m_{12}=0$. Then $A_1=[A_1,B_1]$ with $B_1=m_{13}A_3$ nilpotent which implies $\ad(B_1)$ nilpotent. Hence $A_1=0$ and we are reduced to the case $|S|=2$.

 If condition (b) holds we can, without loss of generality, assume that $m_{13}=m_{23}$. Taking a linear combination of the first two equations we obtain
$$
aA_1+bA_2=(am_{12}-bm_{21})[A_1,A_2]+[aA_1+b\frac{m_{23}}{m_{13}}A_2,m_{13}A_3].
$$
Choose non-zero $a,b\in\F_p$ so that $am_{12}-bm_{21}=0$. Then
$$
aA_1+bA_2=[aA_1+bA_2,m_{13}A_3]
$$
which implies $aA_1+bA_2=0$ since $\ad(A_3)$ is nilpotent. We can then write the equations in the form $A_2=c[A_2,A_3]$, $A_2=d[A_2,A_3]$, $A_3=e[A_2,A_3]$ from which we readily get $A_1=A_2=A_3=0$.

If condition (c) holds we may, without loss of generality, assume that $m_{23}\ne m_{13}$ and $m_{32}m_{21}\ne m_{31}m_{12}$. For non-zero $a,b\in\F_p$ we consider the equation
$$
aA_1+bA_2+A_3=(am_{12}-bm_{21})[A_1,A_2]+ (am_{13}-m_{31})[A_1,A_3]+(bm_{23}-m_{32})[A_2,A_3].
$$
Let $b=m_{12}a/m_{21}$ and choose $\lambda$ such that $am_{13}-m_{31}=\lambda a$. Then
\begin{align*}
bm_{23}-m_{32}=\lambda b&\iff am_{12}m_{23}/m_{21}-m_{32}=\lambda am_{12}/m_{21}\\
&\iff am_{12}m_{23}-m_{32}m_{21}=m_{12}(am_{13}-m_{31})\\
&\iff am_{12}(m_{23}-m_{13})=m_{32}m_{21}-m_{12}m_{31}\\
&\iff a=\frac{m_{32}m_{21}-m_{31}m_{12}}{m_{12}(m_{23}-m_{13})}.
\end{align*}
With this choice of $a$ we have
$$
aA_1+bA_2+A_3=[\lambda aA_1+\lambda b A_2,A_3]=[aA_1+bA_2+A_3,\lambda A_3]
$$
which implies $aA_1+bA_2+A_3=0$ since $\ad(A_3)$ is nilpotent.

If conditions (a), (b), (c) fail then
$$
\begin{vmatrix}m_{31}&m_{32}\\ m_{21}&m_{12}\end{vmatrix}=\begin{vmatrix}m_{12}&m_{13}\\ m_{32}&m_{23}\end{vmatrix}=\begin{vmatrix}m_{21}&m_{23}\\ m_{31}&m_{13}\end{vmatrix}=0
$$
which implies
$$
m_{31}=k_1m_{21},\ m_{32}=k_1m_{12},\ m_{12}=k_2m_{32},\ m_{13}=k_2m_{23},\ m_{21}=k_3m_{31},\ m_{23}=k_3m_{13}
$$
for some $k_1,k_2,k_3\in\F_p^*$. This implies that $k_ik_j=1$ for all $i\ne j$ and hence that $k_i^2=1$ for all $i$. Since, by hypothesis, $k_i\ne1$ we must have $k_i=-1$ for all $i$. Then the relators for $\mathfrak l_S$ are of the form

\begin{align*}
\xi_1&=a[\xi_1,\xi_2]+b[\xi_1,\xi_3],\\
\xi_2&=c[\xi_2,\xi_1]-b[\xi_2,\xi_3],\\
\xi_3&=-c[\xi_3,\xi_1]-a[\xi_3,\xi_2]
\end{align*}
with $a,b,c\in\F_p^*$. After the transformation $\xi_1\mapsto c^{-1}\xi_1$, $\xi_2\mapsto a^{-1}\xi_2$, $\xi_3\mapsto b^{-1}\xi_3$ the relations become
\begin{align*}
\xi_1&=[\xi_1,\xi_2]+[\xi_1,\xi_3],\\
\xi_2&=[\xi_2,\xi_1]-[\xi_2,\xi_3],\\
\xi_3&=-[\xi_3,\xi_1]-[\xi_3,\xi_2]
\end{align*}
But these relations are satisfied if we replace $\xi_i$ by $A_i\in gl_2(\F_p)$ with
$$
A_1=-\frac{1}{2}\begin{bmatrix}0&1\\ 0&0\end{bmatrix},\ \ A_2=-\frac{1}{2}\begin{bmatrix}0&0\\ 1&0\end{bmatrix},\ \ A_3=-\frac{1}{2}\begin{bmatrix}1&1\\ -1&-1\end{bmatrix}
$$
which yields an isomorphism of $\mathfrak l_S$ with $sl_2(\F_p)$.

Thus the only case where Property $FM(n)$ would fail would be when $\ell_{ij}\ne0$ for all $i,j$ and
$$
\ell_{13}/c_1=-\ell_{23}/c_2,\ \ell_{21}/c_2=-\ell_{31}/c_3,\ \ell_{12}/c_1=-\ell_{32}/c_3.
$$
Note that, since $q_i\equiv g_j^{-\ell_{ij}}$ mod $q_j$, this is equivalent to
$$
(q_1^{c_2}q_2^{c_1})^{c_3}\equiv1\ \ {\rm mod}\ q_3,\ \ (q_2^{c_3}q_3^{c_2})^{c_1}\equiv1\ \ {\rm mod}\ q_1,\ \ (q_1^{c_3}q_3^{c_1})^{c_2}\equiv1\ \ {\rm mod}\ q_2.
$$

\section{Proof of Theorem~\ref{Lmex}}

By Theorem~\ref{cs1}, we can find a set of primes $S'=\{q_1',\ldots,q_{2d}'\}$ such that $q_{2i}'=q_i$ and $\ell'_{i,i+1}\ne0$ if $i$ odd, $\ell'_{i,i+1}\ne0$ if $i<2d$ is even and $\ell'_{2d,1}\ne0$ with all other $\ell'_{i,j}=0$ if $i$ or $j$ is odd. If $f$ is a homomorphism of $\mathfrak l_{S'}$ into $gl_n(\F_p)$ let $A_i=f(\xi_i)$. Then $a_iA_i+[A_i,A_{i+1}]=0$ for some non-zero $a_i$ if $i$ is odd and
$A_i=[A_i,B_i]$ for some matrix $B_i$ if $i$ is even. By Lemma~\ref{BB} this implies that $A_i$ is nilpotent if $i$ is even and hence that $\ad(A_i)$ is nilpotent if $i$ is even. But this implies that $A_i=0$ if $i$ is odd. That $G_{S'}$ is mild follows from the fact that $S'$ is a circular set of primes.

\section{Proof of Theorem~\ref{wfmg}}
Let $\rho$ be a continuous homomorphism of $G$ into ${\rm GL}_n^{(1)}(\Z_p)$. If $\rho(x_i)=1+pA_i$ then, modulo $p^3$, we have $\rho(r_i)=1+p^2(c_1A_i+[A_i,A_{i+1}])=0$ if $i<2m$ and $$
\rho(r_{2m})=1+p^2(c_{2m}A_{2m}+[A_{2m},A_1])=0.
$$
Hence, if ${\overline A}_i$ is the image of $A_i$ in $gl_n(\F_p)$, we have
$$
c_1{\overline A}_1+[{\overline A}_1,{\overline A}_2]=0,\ c_2{\overline A}_2+[{\overline A}_2,{\overline A}_3]=0,\cdots,c_{2m}{\overline A}_{2m}+[{\overline A}_{2m},{\overline A}_1]=0
$$
By Lemma~\ref{BB} we see that $\ad({\overline A}_i)$ is nilpotent for all $i$ and hence ${\overline A}_i=0$ for all $i$. But this implies $\rho=1$ since $\rho\ne1$ implies ${\overline A}_i\ne0$ for some $i$ by Lemma~\ref{H}.

\end{document}